\numberwithin{equation}{section}
\newtheorem{theorem}{Theorem}
\newtheorem{lemma}[theorem]{Lemma}
\newtheorem{proposition}[theorem]{Proposition}
\newtheorem{definition}[theorem]{Definition}
\newtheorem{corollary}[theorem]{Corollary}
\numberwithin{theorem}{section}
\theoremstyle{definition}
\theoremstyle{remark}
\newtheorem{remark}{Remark}[section]
\newtheorem{example}[remark]{Example}
\newcommand\indlim\varinjlim
\newcommand\cA{\mathcal{A}}
\newcommand\cB{\mathcal{B}}
\newcommand\cI{\mathcal{I}}
\newcommand\cM{\mathcal{M}}
\newcommand\cO{\mathcal{O}}
\newcommand\cP{\mathcal{P}}
\newcommand\cT{\mathcal{T}}
\let\fg\undefined
\newcommand{\fg}{{\mathfrak g}}
\newcommand{\fm}{{\mathfrak m}}
\newcommand{\ft}{{\mathfrak t}}
\newcommand{\fC}{{\mathfrak C}}
\newcommand{\fS}{{\mathfrak S}}
\newcommand\bbA{\mathbb{A}}
\newcommand\BB{\mathbb{B}}
\newcommand\DD{\mathbb{D}}
\newcommand\bA{{\bbA}}
\newcommand\Z{\mathbb{Z}}
\newcommand\N{\mathbb{N}}
\def\d{\mathrm{d}}
\newcommand\rH{\mathrm{H}}
\newcommand\rT{\mathrm{T}}
\newcommand\GL{\mathrm{GL}}
\newcommand\tr{\mathrm{tr}}
\newcommand\Spec{\mathrm{Spec}}
\newcommand\Gm{\mathbb{G}_m}
\newcommand\End{\mathrm{End}}
\newcommand\Pic{\mathrm{Pic}}
\newcommand\Sym{\mathrm{S}}
\newcommand\ad{\mathrm{ad}}
\newcommand\Supp{\mathrm{Supp}}
\newcommand\Chow{\mathrm{Chow}}
\newcommand\Hilb{\mathrm{Hilb}}
\newcommand\HC{\mathrm{HC}}
\newcommand\length{\mathrm{length}}
\newcommand{\mM}{\mathcal{M}}
\newcommand{\mO}{\mathcal{O}}
\newcommand{\sB}{\mathscr{B}}
\newcommand{\sP}{\mathscr{P}}
\newcommand{\sA}{\mathscr{A}}
\newcommand{\on}{\operatorname}
\newcommand{\tX}{\widetilde X}
\newcommand{\is}{\simeq}
\newcommand{\tC}{\widetilde C}
\newcommand{\quash}[1]{}  
\newcommand{\nc}{\newcommand}
\newcommand{\bbB}{{\mathbb B}}
\newcommand{\Higgs}{{\on{Higgs}}}
\newcommand\sd{\on{sd}}
\nc{\al}{{\alpha}} \nc{\be}{{\beta}} \nc{\ga}{{\gamma}}
\nc{\ve}{{\varepsilon}} 
\nc{\La}{{\Lambda}}
\newcommand{\beqn}{\begin{equation*}}
\newcommand{\eeqn}{\end{equation*}}
\newcommand{\beq}{\begin{equation}}
\newcommand{\eeq}{\end{equation}}
\title{On the Hitchin fibration for algebraic surfaces}
\author{T.H. Chen, B.C. Ngô}
\date{}
\begin{document}

\maketitle

\begin{abstract}
In this paper we explore the structure of the Hitchin fibration for higher dimensional varieties with emphasis on the case of algebraic surfaces.
\end{abstract}

\section{Introduction}
In the celebrated work \cite{Hitchin}, Hitchin constructs a completely integrable system on the moduli space of Higgs bundles over a compact Riemann surface $X$. This system can be presented as a morphism $h_X:\cM_X\to\cA_X$ where $\cM_X$ is the moduli space of Higgs bundles and $\cA_X$ is a certain affine space of half the dimension of $\cM_X$. For every $a\in\cA_X$, Hitchin constructs a spectral curve $Y_a$ which is a finite flat covering of $X$, embedded in the cotangent bundle of $X$ and gives a description of the fiber $h_X^{-1}(a)$ in terms of the Picard variety of $Y_a$. As a result, he proves that connected components of the generic fiber of the Hitchin map $h_X$ are abelian varieties.

In this paper, we explore the structure of the Hitchin map for higher dimensional varieties with emphasis on the case of surfaces. 
Let $k$ be an algebraically closed field of characteristic zero.
In \cite{Simpson 1}, Simpson defines a Higgs bundle over a smooth projective variety $X$ over $k$ as a pair $(E,\theta)$ where $E$ is a vector bundle of degree $n$ equipped with a Higgs field
$\theta:E\to E\otimes_{\cO_X} \Omega^1_{X}$
which is a $\cO_X$-linear map satisfying the integrability equation
\begin{equation}\label{integrability}
	\theta\wedge\theta=0.
\end{equation}
In order to unravel the integrability equation, we recall that $\theta\wedge \theta$ is defined as the composition 
of $\theta:E\to E\otimes_{\cO_X} \Omega^1_{X}$ with the
map $E\otimes_{\cO_X} \Omega^1_{X/k} \to E\otimes_{\cO_X} \Omega^2_{X}$ given by $v\otimes \omega \mapsto \theta(v)\wedge \omega$. After choosing  local coordinates $z_1,\ldots,z_d$ of $X$ around $x\in X$, we can write $\theta$ uniquely as a sum $\theta(v)=\sum_{i=1}^d \theta_i(v) \otimes \d z_i$
where $\theta_i(v)$ are local endomorphisms of $E$. Now, a direct calculation shows 
\begin{equation*}
(\theta\wedge \theta)(v)=\sum_{i<j} [\theta_j,\theta_i](v) \d z_i \wedge  \d z_j
\end{equation*}
and in particular the integrability equation holds if and only if the matrices $\theta_i$ commute with each other.

If $G$ is a reductive group a $G$-bundle with Higgs field on $X$ is a pair $(E,\theta)$ where $E$ is a $G$-bundle over $X$ equipped with a Higgs field
\begin{equation} \label{G-Higgs-field}
\theta\in \rH^0(X,\ad(E) \otimes \Omega^1_{X})
\end{equation}
where $\ad(E)$ is the adjoint vector bundle of $E$. The Higgs field $\theta$ is to satisfy the integrability equation \eqref{integrability}. This means if we have local coordinate $z_1,\ldots, z_d$ and a trivialization of $E$ on a neighborhood $U$ of $x\in X$, then on $U$ we can write $\theta=\sum_{i=1}^d \theta_i \d z_i$
where $\theta_i : U\to \fg$ are functions on $U$ with values in the Lie algebra $\fg$ of $G$ satisfying the integrability equation
$[\theta_i,\theta_j]=0$ for all $1\leq i,j \leq d$. 

We denote $\fC_G^d$ the closed subscheme of $\fg^d$ consisting of $(x_1,\ldots,x_d)\in\fg^d$ such that $[x_i,x_j]=0$ for all indices $i,j$. We consider the diagonal adjoint action of $G$ on $\fC_G^d$ given by 
\begin{equation}\label{diagonal-adjoint}
(x_1,\ldots,x_d) \mapsto (\ad(g)x_1,\ldots,\ad(g)x_d)
\end{equation}
and the commuting action of $\GL_d$ given by
\begin{equation}\label{GLd-acts-on-vectors}
	(x_1,\ldots,x_d) \mapsto (x_1,\ldots,x_d) h
\end{equation}
for every $h\in \GL_d$. We will consider the quotient stack
\begin{equation}
	[\fC_G^d/(G\times \GL_d)].
\end{equation}
For a test scheme $S$, a $S$-point of $[\fC_G^d/(G\times \GL_d)]$ consists in a vector bundle $V$ of rank $d$ over $S$, a $G$-bundle $E$ over $S$, a $\cO_S$-linear morphism $\theta:V\to \ad(E)$ from $V$ to the adjoint vector bundle of $E$ such that for all local sections $v_1,v_2$ of $V$, we have $[\theta(v_1),\theta(v_2)]=0$.   
A Higgs $G$-bundle on a smooth $d$-dimensional algebraic variety $X$ can be then interpreted as a map
\begin{equation}\label{Higgs-surface}
\theta: X \to [\fC_G^d/(G\times \GL_d)]
\end{equation}
such that the induced map $X\to \BB\GL_d$ from $X$ to the classifying space of $\GL_d$ represents the cotangent bundle of $X$.

Following the strategy of \cite{Ngo}, we will study the Hitchin map through the morphism 
\beq\label{Chevalley map}
[\fC^d_G/G]\to\fC_G^d/G
\eeq
from the stack $[\fC_G^d/G]$ to the coarse quotient $\fC_G^d/G$. By definition, the coarse quotient $\fC_G^d/G=\Spec(k[\fC_G^d]^G)$ is the spectrum of of the ring of $G$-invariant functions on $\fC_G^d$. In the case $d=1$, the Chevalley restriction theorem asserts an isomorphism $k[\fg]^G=k[\ft]^W$ where $\ft$ is a Cartan algebra of $\fg$ and $W$ the Weyl group. We also know that $k[\ft]^W$ is a polynomial algebra. 
Thus we have a $\GL_1$-equivariant morphism 
\[[\fg/G]\to\Spec(k[\ft]^W)\]
and, for a smooth curve $X$, the Hitchin map attaches to $\theta$ in (\ref{Higgs-surface}) the induced morphism 
$a:X\to[\Spec(k[\ft]^W)/\GL_1]$, which is the characteristic polynomial of $\theta$
in an appropriate sense.

Thanks to work of Gan-Ginzburg and Joseph \cite{GG,Joseph}, the analogue of the Chevalley restriction theorem holds in the case $d=2$ and $G=\GL_2$:
\begin{equation} \label{Joseph}
	k[\fC_G^2]^G = k[\ft\times\ft]^W.
\end{equation}
In general, $k[\ft\times\ft]^W$ is not a polynomial algebra, and not even a smooth $k$-algebra, although it is always a Cohen-Macaulay ring after a theorem of Hochshter-Robert \cite{HR}.
For an arbitrary number $d$, by restricting functions on $\fC_G^d$ to $\ft^d$, we obtain a 
$\GL_d$-equivariant homomorphism 
\begin{equation} \label{Chevalley-d}
	k[\fC_G^d]^G \to k[\ft^d]^W.
\end{equation} 
It is conjectured that (\ref{Chevalley-d}) is an isomorphism.
In \cite{Hunziker}, Hunziker proves that this homomorphism induces an isomorphism 
from the reduced quotient of $k[\fC_G^d]^G$ to $k[\ft^d]^W$
when $G$ is either a classical group or of type $G_2$. 

As we shall see, in this case $G=\GL_n$, there is a canonical $\GL_d$-equivariant lifting  
\begin{equation}\label{spectral}
	[\fC_G^d/G] \to \Spec(k[\ft^d]^W)
\end{equation}
of $[\fC_G^d/G] \to \fC_G^d/G$ in~\eqref{Chevalley map} along the morphism 
$\Spec(k[\ft^d]^W)\to\fC_G^d/G$
induced from~\eqref{Chevalley-d}. 
Thus for every $d$-dimensional smooth variety $X$, by composing with (\ref{spectral}), we derive a morphism 
 \begin{equation}\label{sd-map}
	\sd_X:\Higgs_{X} \to \cA_X
\end{equation}
where $\Higgs_{X}$ is the moduli stack of all Higgs $\GL_n$-bundles as in \eqref{Higgs-surface} and $\cA_X$ is the space of maps $X\to [\Spec(k[\ft^d]^W)/\GL_d]$ laying over the morphism $X\to \BB\GL_d$ corresponding to the cotangent bundle of $X$. 
We will call $\cA_X$ the space of spectral data and 
$\sd_X$ the spectral data morphism.

In section 2, we will  investigate the relation between our spectral data morphism and the Hitchin map defined by Simpson in \cite{Simpson 2}. 
We show that the Hitchin map factors canonically through the spectral data morphism. 
In section 3, we will construct the spectral cover associated with each spectral datum. With help of this spectral cover, we will provide a description of the generic fiber of the Hitchin map in a similar manner to Beauville-Narasimhan-Ramanan's description \cite{BNR} in the one-dimensional case. This description is of limited use as the spectral cover is not Cohen-Macaulay in general. It thus arises the need of constructing a finite Cohen-Macaulayfication of the spectral cover. This is the purpose of section 4 where we restrict ourselves to the two-dimensional case. The upshot is that in the surface case, as we can construct a canonical Cohen-Macaulayfication of the spectral cover, the Hitchin fibers over generic spectral data have a description fairly similar to the one-dimensional case. The description of the space $\cA_X$ of spectral data remains nevertheless quite difficult and seems to depend on the Enriques classification of surfaces as surveyed in \cite{Beauville}. We investigate this matter in the cases of abelian, ruled and elliptic surfaces in the last sections of the paper. 

In this paper, we restrict ourselves to the case $G=\GL_n$.
All the constructions can be carried out for general reductive groups but technical lemmas still need to be proved. We plan to attend to this matter in a future work. 

Throughout this paper, $k$ is  an algebraically closed field of characteristic $p>n$ or of characteristic zero. All schemes are defined over $k$.
 Without explicitly mentioned otherwise, points will mean $k$-points.  

\section{Spectral data for $\GL_n$}

In this section we study the spectral data morphism for $\GL _n$. We begin by investigating spectral data associated to $d$ commuting $n\times n$-matrices. Instead of $d$ commuting $n\times n$-matrices we will consider a $d$-dimensional $k$-vector space $T$, a $n$-dimensional $k$-vector space $E$ equipped with a $k$-linear map $\theta:T\to \End(E)$ such that for all vectors $x_1,x_2\in T$ we have $[\theta(x_1),\theta(x_2)]=0$. A map $\theta:T\to \End(E)$ satisfying this property will be called a commuting map. 

We will define the spectral datum of $(T,E,\theta)$ as follows. The commuting map $\theta:T\to \End(E)$ defines an action of the symmetric algebra $\Sym(T)$ on $E$. In other words, it produces a $\Sym(T)$-module $F$ whose underlying vector space is $E$. Since $E$ a is finite dimensional vector space, $F$ is supported by finitely many points in the affine space $V=\Spec (\Sym(T))$: we have a decomposition $F=\bigoplus_{\alpha\in V} F_\alpha$ where $F_\alpha$ is a $\Sym(T)$-module annihilated by some power of the maximal ideal $\fm_\alpha$ corresponding to the point $\alpha$. 
Let $\Chow_n(V)$ be the Chow scheme classifying $0$-cycle of length $n$ in $V$.
The decomposition above gives rise to a $0$-cycle 
$$Z=\sum_{\alpha\in V} \lg(F_\alpha)\alpha$$
of length $n$ in $V$. 
We call $Z\in\Chow_n(V)$ the spectral datum of $(T,E,\theta)$.


The above discussion can be generalized over an arbitrary scheme $X$. We can construct the spectral datum attached to $(T,E,\theta)$ where $T$ and $E$ are vector bundles over $X$ of rank $d$ and $n$, and $\theta:T\to \End_{\cO_X}(E)$ is a commuting map. 
Let $V=\Spec_X(\Sym_{\cO_X}(T))$ be the dual bundle of $T$. 
The commuting map extends to a morphism $\Sym_{\cO_X}(T)\to\End_{\cO_X}(E)$. 
Thus there is a coherent sheave 
$F$ on $V$ such that $p_*F=E$  where $p:V\to X$ is the projection map.
Let $\on{Chow}_n(V/X)$ be the relative Chow scheme of $0$-cycle of length $n$ in $V$.
Then as explained in \cite[Section 9.3, p 254]{BLR}, one can associate to 
such $F$ a morphism  
\[
a_F:X\mapsto \on{Chow}_n(V/X),
\]
which is compatible with any base change $X'\to X$ and
for every point $x\in X$ the image $a_F(x)$ is the 
spectral datum $Z$ of the triple $(T,E,\theta)|_x$. 
We call $a_F$ the spectral datum of $(T,E,\theta)$.

Note that any triple $(T,E,\theta)$ as above can be interpreted as
a morphism \[X\to [\fC_{\GL_n}^d/(\GL_n\times\GL_d)]\]
laying over the morphism $S\to\bbB\GL_d$ corresponding to the dual of the $d$-dimensional vector bundle $T$, and 
the spectral datum $a_F$ of $(T,E,\theta)$ can be interpreted as a morphism \[X\to [\Chow_n(\mathbb A^d)/\GL_d]\]
laying over the morphism $X\to\bbB\GL_d$ corresponding to the 
dual of the 
$d$-dimensional vector bundle $T$. 
Then the above construction provides us with a morphism
\beq\label{Chevalley-d GL_n}
[\fC_{\GL_n}^d/(\GL_n\times\GL_d)] \to [\Chow_n(\mathbb A^d)/\GL_d]
\eeq
which is the morphism in (\ref{spectral}) (or rather its quotient by $\GL_d$).

\begin{remark}
In the case $d=2$ and $k$ is of characteristic zero, by the works of Gan-Ginzburg and Joseph (see, e.g., \cite[Theorem 1.2.1]{GG}), 
the Chevalley restriction map  
induces an 
isomorphism 
$\fC_{\GL_n}^2/\GL_n\is\Chow_n(\mathbb A^{2})$ and from it we have a morphism 
  \[[\fC_{\GL_n}^2/\GL_n\times\GL_2]\to[\Chow_n(\mathbb A^{2})/\GL_2]\]
which is the morphism (\ref{Chevalley-d GL_n}) in the case $d=2$.
At the moment, we don't know if their work can be generalized to positive characteristic. 
The discussion above provides a construction of (\ref{Chevalley-d GL_n}) which is valid in any 
characteristic.
\end{remark}

We will apply it to the case where $X$ is smooth of dimension $d$ and $T=\cT_X$ is its tangent bundle. In this case we recover the concepts of Higgs bundles and the spectral data morphism.

In more detail. 
Let $(E,\theta)$ be a Higgs bundle where $E$ is a vector bundle of rank $n$ over $X$ and $\theta:E\to E\otimes_{\cO_X} \Omega^1_{X}$ is a Higgs field.
The Higgs field $\theta$ can be regarded as a $\cO_X$-linear map
\begin{equation} \label{tangent-action}
\theta:\cT_{X} \to \End_{\cO_X}(E)
\end{equation}
where $\cT_{X}$ is the tangent sheaf of $X$ dual to the cotangent sheaf $\Omega^1_{X}$. The integrability condition $\theta\wedge\theta=0$ guarantees that \eqref{tangent-action} induces an action of the symmetric algebra of $\cT_{X/k}$ over $\cO_{X/k}$ i.e. a $\cO_{X}$-linear map $\theta:\Sym_{\cO_{X/k}}(\cT_{X/k}) \to\End_{\cO_X}(E)$. In other words, there is a canonical isomorphism $E=p_* F$ where $F$ is a cohorent module over $\rT^*_X=\Spec_X \Sym_{\cO_{X/k}}(\cT_{X/k})$ the total space of the cotangent bundle and $p:\rT^*_X\to X$ is the projection map.

For every point $x\in X$, $E_x$ is a $n$-dimensional $k$-vector spaces equipped with an action of the symmetric algebra $\Sym_k(\cT_{X,x})$ where $\cT_{X,x}$ is the tangent space of $X$ at $x$. We denote $F_x$ the vector space $E_x$ but regarded as $\Sym_k(\cT_{X,x})$-module. As $\Spec (\Sym_k(\cT_{X,x})) = \rT^*_{X,x}$, there is a canonical decomposition
$F_x =\bigoplus_{\alpha\in \rT^*_{X,x}} F_\alpha$
where $F_\alpha$ is a finite length module supported by the point $\alpha\in T_x^*$. The function
\begin{equation} \label{0-cycle-in-cotangent}
x\mapsto \sum_\alpha n_\alpha \alpha
\end{equation}
where $n_\alpha=\lg(F_\alpha)$ is the dimension of $F_\alpha$ as $k$-vector space, defines a map from $X$ with values in the space of $0$-cycles of $\rT^*_{X,x}$. 
We define the {\em spectral datum} of a Higgs bundle $(E,\theta)$ to be the section
\[a:X \to \Chow_n (\rT^*_X/X)\] given by the formula \eqref{0-cycle-in-cotangent}. Here, the relative Chow variety $\Chow_n (\rT^*_X/X)$ constructed from $\Chow_n(\mathbb A^d)$ by twisting it with the $\GL_d$-bundle attached to the cotangent bundle $\rT^*_X$. This provides us with a concrete description of the spectral data morphism \[\sd_X:\Higgs_X\to\cA_X\] 
in \eqref{sd-map} in the $\GL_n$ case.

We shall investigate the relation between our spectral data morphism and the Hitchin map defined by Simpson.
Let $V$ be a finite dimensional vector space.
We recall that the coordinate ring of $\Chow_n(V)$ is the ring of invariants with respect to the action of the symmetric group $\fS_n$ on the symmetric algebra $\Sym(T^n)$ of $T^n$, where $T^n$ is the $n$-fold direct sum $T\oplus\cdots\oplus T$. By a general theorem of Hochster-Roberts \cite{HR}, this ring is Cohen-Macaulay. A point of $\Chow_n(V)$ will be represented by an unordered collection of $n$ points
$$[v_1,\ldots,v_n] \in \Chow_n(V)$$
with $v_1,\ldots,v_n\in V$ not necessarily distinct. We may also represent it as
$$[u_1^{n_1},\ldots,u_m^{n_m}]\in \Chow_n(V)$$
where $u_1,\ldots,u_m$ are distinct vectors of $V$ and $n_i$ is the number of occurrences of $u_i$ in $[v_1,\ldots,v_n]$.  

We will define an affine embedding of $\Chow_n(V)$. 

\begin{lemma}\label{closed-immersion}
For every finite dimensional vector space $V$ the morphism 
\begin{equation}\label{characteristic-polynomial}
	\iota:\Chow_n(V) \to V \times \Sym^2 V \times \cdots \times \Sym^n V
\end{equation}
given by 
\begin{equation} \label{chi-coeff}
[v_1,\ldots,v_n]\mapsto (a_1,\ldots,a_n)
\end{equation}
where $a_1,\ldots,a_n$ are the symmetric tensors 
\begin{eqnarray*}
	a_1 & = & v_1+\cdots+v_n, \\
	a_2 & = & v_1v_2+v_1v_3+\cdots+v_{n-1}v_n, \\
	&\cdots& \\
	a_n & = & v_1\ldots v_n.
\end{eqnarray*}
is a closed embedding.
\end{lemma}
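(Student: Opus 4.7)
The plan is to show that the pullback morphism
\[\iota^{*}\colon k\bigl[V\times\Sym^{2}V\times\cdots\times\Sym^{n}V\bigr]\longrightarrow k[\Chow_{n}(V)]=k[V^{n}]^{\fS_{n}}\]
on coordinate rings is surjective; because both source and target of $\iota$ are affine, this is equivalent to $\iota$ being a closed embedding. Under the canonical identification of $(\Sym^{i}V)^{\vee}$ with linear functionals on $\Sym^{i}V$, the image of $\iota^{*}$ is the $k$-subalgebra generated by all evaluations $\phi(a_{i})\in k[V^{n}]^{\fS_{n}}$ for $1\le i\le n$ and $\phi\in (\Sym^{i}V)^{\vee}$; concretely, after fixing a basis $e_{1},\ldots,e_{d}$ of $V$ with dual coordinates $x_{j,\alpha}$ on $V^{n}$ (the group $\fS_{n}$ permuting the index $j$), this is the subalgebra generated by the coefficients of $a_{1},\ldots,a_{n}$ in the monomial basis $\{e^{\boldsymbol{m}}\}$ of $\Sym^{\bullet}V$.

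I would combine two classical ingredients, both using our standing hypothesis $\operatorname{char}k=0$ or $>n$. The first is the fundamental theorem of multisymmetric functions: the ring $k[V^{n}]^{\fS_{n}}$ is generated by the multipower sums $p_{\boldsymbol{m}}:=\sum_{j=1}^{n}x_{j}^{\boldsymbol{m}}$ with $|\boldsymbol{m}|\le n$, obtained by polarizing the single-variable result that $p_{1},\ldots,p_{n}$ generate the ordinary symmetric functions. The second is Newton's identity, valid in the commutative ring $\Sym^{\bullet}V$:
\[p_{k}\;=\;a_{1}\,p_{k-1}-a_{2}\,p_{k-2}+\cdots+(-1)^{k-1}k\,a_{k}\qquad(1\le k\le n),\]
where $p_{k}:=\sum_{j=1}^{n}v_{j}^{k}\in\Sym^{k}V$. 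No division is required, so induction on $k$ expresses each $p_{k}$ as a polynomial in $a_{1},\ldots,a_{n}$ inside $\Sym^{\bullet}V$, and hence places every linear evaluation of $p_{k}$ into the image of $\iota^{*}$.

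To conclude, I would expand using the multinomial theorem
\[p_{k}\;=\;\sum_{|\boldsymbol{m}|=k}\binom{k}{\boldsymbol{m}}\,p_{\boldsymbol{m}}\,e^{\boldsymbol{m}}\ \in\ \Sym^{k}V,\]
reading off that the coefficient of $e^{\boldsymbol{m}}$ in $p_{k}$ equals $\binom{k}{\boldsymbol{m}}\,p_{\boldsymbol{m}}$. For $k\le n$ the multinomial coefficient $\binom{k}{\boldsymbol{m}}$ is invertible in $k$, so combining with the previous step puts every $p_{\boldsymbol{m}}$ with $|\boldsymbol{m}|\le n$ into the image of $\iota^{*}$. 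By the multisymmetric fundamental theorem this image is all of $k[V^{n}]^{\fS_{n}}$, proving the surjectivity of $\iota^{*}$. The main obstacle is the invariant-theoretic input, namely that multipower sums of bounded degree generate $k[V^{n}]^{\fS_{n}}$; this is where the characteristic hypothesis $\operatorname{char}k=0$ or $>n$ enters essentially. Granting it, the remainder is a formal combination of Newton's identity with the multinomial expansion.
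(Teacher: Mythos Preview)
Your proof is correct but takes a genuinely different route from the paper's. The paper proves surjectivity of $\iota^{*}$ directly and self-containedly: it decomposes $(\Sym(T^{n}))^{\fS_{n}}$ into pieces $\Sym^{[\nu]}T$ indexed by $\fS_{n}$-orbits of multi-indices (equivalently, partitions $[\nu]$ of length $\le n$), identifies the generators $\Sym^{i}T$ with the pieces $\Sym^{[1^{i}]}T$, and then runs an induction on the partition order showing that the multiplication map $\Sym^{[1^{i}]}T\otimes\Sym^{[\nu]}T\to(\Sym(T^{n}))^{\fS_{n}}$ hits the top piece $\Sym^{[\nu+1^{i}]}T$ modulo lower ones. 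This is a tensorial rewriting of the classical proof for $\dim T=1$ and needs no external input.

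Your argument instead \emph{imports} the invariant-theoretic content by citing the fundamental theorem of multisymmetric functions (multipower sums $p_{\boldsymbol{m}}$ with $|\boldsymbol{m}|\le n$ generate), and then reduces the question to showing each $p_{\boldsymbol{m}}$ lies in the subalgebra generated by coefficients of the $a_{i}$. The passage via Newton's identities in $\Sym^{\bullet}V$ and multinomial extraction is clean and correct; one small point you leave implicit is that a linear functional applied to a product $a_{i_{1}}\cdots a_{i_{r}}$ in $\Sym^{\bullet}V$ is a polynomial in linear functionals applied to the individual $a_{i_{j}}$'s (this follows from dualizing the multiplication map), so the induction in Newton's identity really lands in $\operatorname{im}(\iota^{*})$. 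Your approach is shorter to write but leans on the characteristic hypothesis twice (for the power-sum generation and for inverting $\binom{k}{\boldsymbol{m}}$), whereas the paper's direct argument via elementary symmetric tensors is essentially characteristic-free. Both reach the same conclusion; the paper's version has the advantage of being self-contained, while yours makes the link to classical multisymmetric invariant theory explicit.
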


\begin{proof}
	If $T$ denote the the dual vector space of $V$, then we have $V=\Spec (\Sym(T))$ where $\Sym(T)=\bigoplus_{i=0}^\infty \Sym^i T$ is the symmetric algebra of $T$. We also have $V^n=\Spec(\Sym(T^n))$ and $V^n/\fS_n=\Spec(\Sym(T^n)^{\fS_n})$. On the other hand, we have $\Sym^i V=\Spec(\Sym(\Sym^i T))$. What we have to prove is that the homomorphism of $k$-algebras 
	\begin{equation} \label{symmetric-polynomial}
	\bigotimes_{i=1}^n \Sym(\Sym^i T) \to (\Sym (T^n))^{\fS_n}
\end{equation}  
given by formulae \eqref{chi-coeff}, is surjective for every finite dimensional $k$-vector space $T$. 

When $\dim(T)=1$, this is the fundamental theorem of symmetric polynomials asserting that every symmetric polynomial is a polynomial of the elementary symmetric polynomials.  In one the dimensional case, the homomorphism  \eqref{symmetric-polynomial} is even an isomorphism. This is no longer true in higher dimensional case. The proof of surjectivity in higher dimensional case is completely similar to the usual proof in the one dimensional case. The main difficulty is in fact to choose adequate notations.

The component-wise action of $\Gm^n$ on $T^n$ induces a decomposition in eigenspaces $T^n=T_1\oplus \cdots\oplus T_n$ where $T_1,\ldots,T_n$ are $n$ copies of $T$. This induces an action of $\Gm^n$ on $\Sym(T^n)$ which can also be decomposed as a direct sum of eigenspaces 
$$\Sym(T^n)=\bigoplus_{\nu\in\N^n} \Sym^\nu T$$
where $\Sym^\nu T= \Sym^{\nu_1} T_1\otimes\cdots\otimes \Sym^{\nu_n} T_n$ for each $\nu=(\nu_1,\ldots,\nu_n)\in\N^n$ (we agree that $0\in\N$). 

For every $\nu\in\N^n$, we will denote $[\!\nu]$ its orbit under the symmetric group $\fS_n$. We then have a decomposition 
$$(\Sym(T^n))^{\fS_n}=\bigoplus_{[\!\nu]} \Sym^{[\!\nu]} T$$
where $[\!\nu]$ runs over the set of orbits of $\fS_n$ acting on $\N^n$ and
$$\Sym^{[\!\nu]} T= \left(\bigoplus_{\nu\in\,[\!\nu]} \Sym^{\nu} T\right)^{\fS_n}.$$
In the $\fS_n$-orbit $[\!\nu]$, there exists a unique element $\nu=(\nu_1,\ldots,\nu_n)$ satisfying $\nu_1\geq\ldots \geq\nu_n$. We rewrite $[\!\nu]$ in the partition form 
$$[\!\nu]=[\mu_1^{m_1},\ldots,\mu_e^{m_e}]$$
where $\mu_1 > \cdots > \mu_e \geq 0$ are the distinct integers occurring as coordinates of $(\nu_1,\ldots,\nu_n)$ with multiplicities $m_1,\ldots,m_e$ respectively. We have $m_1+\cdots+m_e=n$ and 
$$\deg(\nu)=\nu_1+\cdots+\nu_n=m_1\mu_1+\cdots+ m_e \mu_e.$$
With these notations being set up, we have
$$\Sym^{[\!\nu]} T = \bigotimes_{i=1}^e \Sym^{m_i}(\Sym^{\mu_i} T).$$
We observe if $\mu_e=0$, the last term $\Sym^{m_e}(\Sym^{\mu_e} T)$ may be removed from this formula. For this reason, we will write $[\!\nu]$ in the more compact form
$$[\!\nu]=[\mu_1^{m_1},\ldots,\mu_e^{m_e}]$$
with $\mu_1>\ldots>m_e >0$, the possible term $\mu_e=0$ being removed. We have then 
$$\lg [\!\nu]:=m_1+\cdots+m_e \leq n.$$
In particular, in degree $d$, we have 
$$(\Sym^d(T^n))^{\fS_n}=\bigoplus_{\deg [\!\nu]=d, \lg [\!\nu] \leq n} \Sym^{[\!\nu]}T.$$
 
In degree 1, we have the obvious isomorphism $T=(T^n)^{\fS_n}$ given by the diagonal map. It starts being tricky already in degree $2$. If $n=1$,
we have the tautological isomorphism $(\Sym^2(T^n))^{\fS_n}=\Sym^2 T$.
If $n\geq 2$, we have the decomposition in degree $2$:
$$(\Sym^2(T^n))^{\fS_n}=\Sym^2 T \oplus \Sym^2 T$$
in which the first copy of $\Sym^2 T$ corresponds to $[\!\nu]=[2]$ whereas the second copy corresponds to $[\!\nu]=[1^2]$.
We also note that for every $i\in\{1,\ldots,n\}$ and $[\!\nu]=[1^i]$, we have an isomorphism 
$$\Sym^i T \simeq \Sym^{[1^i]} T$$
that is exactly the one used in the formulae \eqref{chi-coeff}.

With these notations being set up, we can prove the surjectivity of the algebra homomorphism \eqref{symmetric-polynomial} by the same induction on the degree as in the case $\dim(T)=1$. The induction uses the following fact: For every $i\in\{1,\ldots,n\}$ and 
partition $[\!\nu]=(\nu_1\geq\cdots\geq\nu_n)$, we denote 
	$$[\!\nu+1^i]=(\nu_1+1\geq\cdots \geq \nu_i+1 \geq \nu_{i+1} \cdots\geq \nu_n)$$
	which is a partition of degree $\deg[\!\nu + 1^i]=\deg[\!\nu]+i$. The multiplication map 
	$$\kappa(i,[\!\nu]):\Sym^{[1^i]}T \otimes \Sym^{[\!\nu]}T \to \Sym^{\deg[\!\nu]+i} T$$
	has image contained in 
	$$\bigoplus_{[\!\nu']\leq [\!\nu+1^i]} \Sym^{[\!\nu']}T$$
where $[\!\nu']$ runs over the set of partitions less than $[\!\nu_+\!]$ with respect to the usual partial order among the partitions. More over, the composition of $\kappa_{[\!\nu]}$ with the projection on $\Sym^{[\!\nu_+\!]}$ is surjective. 
This fact can be verified with an easy calculation on tensors. 

We also observe unless $[\nu]=0$, there exists $i\in\{1,\ldots,n\}$ and a partition $[\nu_-]$ such that $[\nu]=[\nu_-+1^i]$. It follows that the vector subspaces $\Sym^{[1^i]}T$ of $(\Sym(T^n))^{\fS_n}$ generate it as an algebra.
\end{proof}

In general, $\Chow_n(V)$ is a strict closed subscheme of $V\times \Sym^2 V \times \cdots \times \Sym^n V$. As an instructive example, let us describe $\Chow_2(\bA^2)$ as a hypersurface of the 5-dimensional affine space $\bA^2\times \Sym^2 \bA^2$. Let $V=\bA^2$ and 
$$\Chow_2(V) \to V\times \Sym^2(V)$$
denote the map given by $[v_1,v_2] \mapsto (v,w)$ with $v=v_1+v_2$ and $w=v_1v_2$. It is clear that $\Chow_2(V)$ is the closed subscheme of $V\times \Sym^2(V)$ consiting of pairs $(v,w)$ such that $v^2-4w$ is of the form $u^2$ with $u\in V$. Now we need to find the equation defining the image of the map $V\to \Sym^2(V)$ given by $u\mapsto u^2$. Now we write $v\in V$ with coordinates $v=(x,y)\in\bA^2$. Then $v^2=(x^2,xy,y^2)$ in $\Sym^2 V=\bA^3$. Now the points $(w_1,w_2,w_3)=(x^2,2xy,y^2)$ move in the quadric defined by the equation $w_2^2-4w_1w_3=0$. It follows that $\Chow_2(V)$ is the closed subscheme of $V\times \Sym^2(V)$ defined by the equation
$$\Chow_2(V)=\{(x,y,w_1,w_2,w_3)\in V\times \Sym^2 V \mid 
(xy-2w_2)^2=(x^2-4w_1)(y^2-4w_3)\}$$
We also note that the morphism $(w_1,w_2,w_3)\mapsto w_2^2-4w_1w_3$ is the explicit expression of the canonical $\GL_2$-equivariant linear map $\Sym^2(\Sym^2 V ) \to (\wedge^2 V)^{\otimes 2}$ whose kernel is $\Sym^4(V)$.

In \cite{Simpson 2}, Simpson constructs the Hitchin map
\begin{equation}\label{Simpson-Hitchin} 
	h_X:\Higgs_X\to \cB_X=\bigoplus_{i=1}^n \rH^0(X,\Sym^i \Omega^1_X)
\end{equation}
assigning $(E,\theta)\in\Higgs_X$ with $h_X(E,\theta)=(b_1,\ldots,b_n)$ where $b_i \in \rH^0(X,\Sym^i \Omega_X^1)$ is the symmetric form defined as follows. By iterating $i$ times the Higgs field $\theta:E\to E\otimes \Omega^1_X$ we get a map $\theta^i:E\to E\otimes \Sym^i \Omega^1_X$, and we set $b_i=\tr(\theta^i) \in \rH^0(X,\Sym^i \Omega^1_X)$. We may also define symmetric form $a_i\in \rH^0(X,\Sym^i \Omega^1_X)$ by using the $\GL_d$-equivariant map
\begin{equation}\label{s_i}
	\Chow_n(\bA^d)\stackrel{\iota}\to \bA^d\times\Sym^2(\bA^d)\times\cdot\cdot\cdot\times\Sym^n(\bA^d)\stackrel{pr}\to \Sym^i(\bA^d)  
\end{equation}
defined as in \eqref{characteristic-polynomial}.
Namely, we can interpret sections in $\rH^0(X,\Sym^i \Omega^1_X)$ as morphisms 
$X\to[\Sym^i(\bA^d)/\GL_d]$ laying over the morphism 
$X\to\GL_d$ corresponding $\Omega_X^1$, and we define 
\[a_i:X\stackrel{a}\to[\Chow_n(\bA^d)/\GL_d]\stackrel{}\to[\Sym^i(\bA^d)/\GL_d]\]
where $a=\sd_X(E,\theta)\in\cA_X$, interpreting 
as a morphism
$a:X\to[\Chow_n(\bA^d)/\GL_d]$ laying over the morphism 
$X\to\GL_d$ corresponding $\Omega_X^1$, 
and 
the second arrow is the morphism induced by (\ref{s_i}).
The variables $(a_i)$ and $(b_i)$ can be obtained from each other by the formula
$$ia_i=-(b_i+a_1b_{i-1}+\cdot\cdot\cdot+a_{i-1}b_1)$$ 
valid for all $i=1,...,n$. In other words, the change of variables from $(a_i)$ to $(b_i)$ and vice versa is given by an automorphism of $\prod_{i=1}^n \Sym^i(\bA^d)$. It gives rise to a morphism 
\begin{equation} \label{iota_X}
	\iota_X:\cA_X\to \cB_X
\end{equation}
such that
\begin{equation}\label{factorization-Hitchin}
	h_X=\iota_X \circ \sd_X
	\end{equation}
By lemma \ref{closed-immersion}, we know that $\iota_X$ is a closed immersion.
This provides a canonical factorization of the Hitchin map though the spectral data morphism.

\begin{remark}
One can regard
$\iota_X$
as a global version 
of the embedding $\iota$ in (\ref{characteristic-polynomial}).
\end{remark}

It seems natural to conjecture that $\cA_X$ is the image of $h_X$ in $\cB_X$. We don't know it for the moment but it appears that the geometry of $\cA_X$ may be very complicated (see, e.g., section \ref{Ab surfaces}). In the case of surfaces, we will construct an open subset 
$\cA^\heartsuit_X$ of $\cA_X$
such that for every geometric point $a\in \cA^\heartsuit_X$ we can describe the fiber $\sd_X^{-1}(a)$ im a way similar to \cite{BNR} and prove that $\sd^{-1}_X(a)\neq\emptyset$. 

\section{Universal spectral cover} 

In \cite{Hitchin}, Hitchin described fibers of his fibration with help of the concept of spectral curves. We are now about to generalize his construction to the case of higher dimensional varieties and construct spectral cover attached to each spectral datum. 

The construction of spectral covers relies on the Cayley-Hamilton equation. Let $V$ be a $d$-dimensional vector space, $\Chow_n(V)$ the Chow scheme of $0$-cycles of length $n$ on $V$. We will represent a point of $\Chow_n(V)$ as an unordered collection of $n$ points of $V$
\begin{equation}
	[v_1,\ldots,v_n]\in \Chow_n(V).
\end{equation} 
Recall that we have a closed embedding $\Chow_n(V)\to V\times \Sym^2 V \times \cdots \times \Sym^n V$ given by $$[v_1,\ldots,v _n]\mapsto (a_1,\ldots,a_n)$$ 
as in \eqref{chi-coeff}.
We consider the morphism
\begin{equation}
	\chi_V:V\times \Chow_n(V) \to \Sym^n V
\end{equation}
given by 
\begin{equation}
\chi_V(v,[v_1,\ldots,v_n])= (v-v_1)\ldots(v-v_n)=v^n-a_1 v^{n-1}+\cdots+(-1)^n a_n.	
\end{equation}
We define the closed subscheme $\on{Cayley}_n(V)$ to be 
\begin{equation}
	\on{Cayley}_n(V)=\chi_V^{-1}(\{0\})
\end{equation}
the fiber of $0\in \Sym^n V$. 

\begin{lemma} \label{Cayley}
Let $V$ be a $d$-dimensional vector space.
\begin{enumerate}
\item The projection of $p_n(V):\on{Cayley}_n(V)\to\Chow_n(V)$ is a finite morphism which is étale over the open subset $\Chow'_n(V)$ of $\Chow_n(X)$ consisting of multiplicity free $0$-cycles. We call $\on{Cayley}_n(V)$ the universal spectral cover of $\Chow_n(V)$. 
\item  For every point $a=[x_1^{n_1},\ldots,x_m^{n_m}]\in \Chow_n(V)$ where $x_1,\ldots,x_m$ are distinct points of $V$, and $n_1,\ldots,n_m$ are positive integers such that $n_1+\cdots+n_m=n$, the fiber of $\on{Cayley}_n(V)$ over $a$ is the finite subscheme of $V$
	\begin{equation}
		\on{Cayley}_n(V,a)=\bigsqcup_{i=1}^m \Spec(\cO_{V,x_i}/\fm_{x_i}^{n_i}),
	\end{equation}
where $\cO_{V,x_i}$ is the local ring of $V$ at $x_i$, and $\fm_{x_i}$ its maximal ideal. In particular, as soon as $d\geq 2$ and $n\geq 2$, then the cover $\on{Cayley}_n(V)\to \Chow_n(V)$ is not flat.

\item Let $F$ be a finite $\cO_V$-module of length $n$ and $a\in\Chow_n(V)$ its spectral datum. Then $F$ is supported by the finite subscheme $\on{Cayley}_n(V,a)$ of $V$. (This is a generalization of the Cayley-Hamilton theorem)
\end{enumerate}	
\end{lemma}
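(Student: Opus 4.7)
My plan is to prove part (2) first; parts (1) and (3) then follow with relatively little additional work.

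For part (2), I fix $a=[x_1^{n_1},\ldots,x_m^{n_m}]$ and consider the $\Sym^nV$-valued polynomial $\chi_V(v,a)=\prod_i(v-x_i)^{n_i}$. Because $\Sym V$ is a polynomial ring, hence an integral domain, this product vanishes set-theoretically only at the $x_i$, so describing the scheme-theoretic fiber reduces to a local computation at each $x_i$: I need to show that the ideal $I_a\subset\cO_{V,x_i}$ generated by the components of $\chi_V$ in any basis of $\Sym^nV$ equals $\fm_{x_i}^{n_i}$. Setting $w=v-x_i\in V\otimes\fm_{x_i}$ and $c_j=x_i-x_j\in V\setminus\{0\}$ for $j\neq i$, I factor $\chi_V=w^{n_i}\cdot P$ with $P=\prod_{j\neq i}(w+c_j)^{n_j}$; the inclusion $I_a\subset\fm_{x_i}^{n_i}$ is immediate. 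For the reverse inclusion I will pass to the associated graded $\gr_{\fm_{x_i}}\cO_{V,x_i}=\Sym V^*$, noting that $\chi_V\equiv w^{n_i}P_0\pmod{\fm_{x_i}^{n_i+1}}$ with nonzero leading term $P_0=\prod_{j\neq i}c_j^{n_j}\in\Sym^{n-n_i}V$, while the image of $w^{n_i}$ in $\Sym^{n_i}V\otimes\Sym^{n_i}V^*$ is the canonical element corresponding to the identity of $\Sym^{n_i}V^*$. The induced ``coefficient map'' $\Sym^nV^*\to\Sym^{n_i}V^*$ is then the transpose of multiplication by $P_0\colon\Sym^{n_i}V\to\Sym^nV$; the latter is injective (domain, nonzero element), so its transpose is surjective, and Nakayama's lemma applied to $\fm_{x_i}^{n_i}/I_a$ finishes the job.

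For part (1), finiteness of $p_n(V)$ follows by choosing coordinates $V=\Spec k[z_1,\ldots,z_d]$ and expanding $v^n=\sum_{|\alpha|=n}\binom{n}{\alpha}z^\alpha e^\alpha$; the multinomial coefficients are invertible under the hypothesis $\ch k=0$ or $\ch k>n$, so the equation $\chi_V=0$ expresses each degree-$n$ monomial in the $z_j$ as a polynomial of degree $<n$ with coefficients pulled back from $\cO_{\Chow_n(V)}$. Hence $\cO_{\on{Cayley}_n(V)}$ is generated as an $\cO_{\Chow_n(V)}$-module by the finitely many monomials of degree $<n$. For the étale assertion over $\Chow'_n(V)$, I will pull back along the étale $\fS_n$-cover $\pi\colon V^n_{\mathrm{dist}}\to\Chow'_n(V)$: part (2) applied pointwise (with each $n_i=1$) identifies the pullback with the disjoint union of the $n$ graphs of the projections $V^n_{\mathrm{dist}}\to V$, so it is étale, and étale-ness descends through $\pi$. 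Non-flatness for $d,n\geq 2$ reduces to comparing the fiber length $\binom{n+d-1}{d}$ over the non-reduced point $[x^n]$ with the generic length $n$.

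For part (3), a standard Nakayama-and-length argument suffices: $F_{x_i}$ is an $\cO_{V,x_i}$-module of length $n_i$ supported at the maximal ideal, the descending chain $\fm_{x_i}^kF_{x_i}$ stabilizes at $0$ by Nakayama, and since each nonzero successive quotient has length at least one, stabilization must occur within $n_i$ steps, giving $\fm_{x_i}^{n_i}F_{x_i}=0$. Combined with (2), this exhibits $F$ as supported on $\on{Cayley}_n(V,a)$.

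The main obstacle will be the reverse inclusion in part (2). The subtlety is that $\chi_V(v,a)$ takes values in the multi-dimensional space $\Sym^nV$, so the one-variable factorization $(v-x_i)^{n_i}\cdot\mathrm{unit}$ does not directly produce an ideal equal to $\fm_{x_i}^{n_i}$; one must instead verify that the $\binom{n+d-1}{n}$ individual components of $\chi_V$ collectively, not individually, generate the expected power of the maximal ideal. The tautological interpretation of $w^{n_i}$ in the associated graded, together with the injectivity of multiplication by $P_0$ in the polynomial ring $\Sym V$, is what makes this work.
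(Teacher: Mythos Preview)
Your proof is correct and arrives at the same conclusions, but the technical route in parts (1) and (2) differs from the paper's. The paper introduces, for each linear form $t\in V^*$, the scalar function $f_t(a,v)=\prod_i(t(v)-t(v_i))$ and argues that these generate the ideal of $\on{Cayley}_n(V)$; finiteness then comes from observing that for a basis $t_1,\ldots,t_d$ of $V^*$, each $f_{t_j}$ is monic of degree $n$ in the single coordinate $z_j=t_j(v)$, so $\on{Cayley}_n(V)$ sits inside a scheme finite flat of degree $n^d$ over $\Chow_n(V)$. For (2) the paper again works with the $f_t$'s, reducing the equality $I_a=\fm_{x_i}^{n_i}$ (after Nakayama) to the fact that $n_i$-th powers of linear forms span $\fm_{x_i}^{n_i}/\fm_{x_i}^{n_i+1}$. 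Your approach instead keeps $\chi_V$ as a $\Sym^nV$-valued function throughout: for (1) you read off from the multinomial expansion that each $z^\alpha$ with $|\alpha|=n$ lies in the $\cO_{\Chow_n(V)}$-span of lower monomials; for (2) your duality argument (transpose of multiplication by the nonzero $P_0$ is surjective) is a clean coordinate-free substitute for the polarization statement. One trade-off: the paper's finiteness argument works in every characteristic, whereas your multinomial argument genuinely needs $\binom{n}{\alpha}$ invertible, i.e.\ the standing hypothesis $\operatorname{char}k=0$ or $>n$; on the other hand you actually address the \'etale claim over $\Chow'_n(V)$, which the paper states but does not prove. Part (3) is identical in both.

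One small point to tighten: in your \'etale argument you invoke part (2) ``pointwise'' to identify the pullback to $V^n_{\mathrm{dist}}$ with the disjoint union of graphs, but part (2) is a statement about closed fibers, not a scheme-theoretic identification. The fix is short: the union of graphs is a closed subscheme of the pullback, both are finite over the smooth base $V^n_{\mathrm{dist}}$, and by your fiber computation they have the same length over every closed point, so the cokernel of the inclusion vanishes by Nakayama.
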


\begin{proof}
We will first describe a set of the generators of the ideal defining the closed subscheme $\on{Cayley}_n(V)$ in $V\times\Chow_n(V)$. Each linear form $t:V\to k$ induces a map on Chow varieties $[t]:\Chow_n(V)\to \Chow_n(\bA^1)$ mapping $a=[v_1,\ldots,v_n]\in\Chow_n(V)$ on $t(a)=[t(v_1),\ldots,t(v_n)]\in \Chow_n(\bA^1)$. For the diagram 
\begin{equation}
	\begin{tikzcd}
\Chow_n(V) \times V \arrow{r}{\chi_V} \arrow{d}[swap]{[t]\times t}
& \Sym^n(V) \arrow{d}{\Sym^n(t)} \\
\Chow_n(\bA^1) \times \bA^1 \arrow{r}[swap]{\chi_{\bA^1}}
& \Sym^n(\bA^1)=\bA^1
\end{tikzcd}
\end{equation}
is commutative, the function $f_t=\chi_{\bA^1}\circ (t\times [t]):\Chow_n(V) \times V \to \bA^1$ vanishes on $\on{Cayley}_n(V)$. Explicitly for every $a=[v_1,\ldots,v_n]\in \Chow_v(V)$, we have
\begin{equation} \label{fta}
	f_t(a,v)=(t(v)-t(v_1)) \ldots (t(v)-t(v_n)).
\end{equation}
Moreover, for $\Sym^n(t)$ generates the ideal defining $0$ in $\Sym^n(V)$ as $t$ varies on the dual vector space $T$ of $V$, the functions $f_t$ generate the ideal defining $\on{Cayley}_n(V)$ inside $\Chow_n(V)\times V$. This provides a convenient set of generators of this ideal albeit infinite and even innumerable as $k$ may be. 

\begin{enumerate}
\item If $t_1,\ldots,t_d$ form a basis of $T$, then $f_{t_1},\ldots,f_{t_d}$ cut out a closed  subscheme $Z$ of $\Chow_n(V)\times V$ which is finite flat of degree $n^d$ over $\Chow_n(V)$. Since $\on{Cayley}_n(V)$ is a closed subscheme of $Z$, it is also finite over $\Chow_n(V)$. This proves the first assertion of the lemma.

\item We will prove that for $a=[x_1^{n_1},\ldots,x_m^{n_m}]\in \Chow_n(V)$ where $x_1,\ldots,x_m$ are distinct points of $V$, and $n_1,\ldots,n_m$ are positive integers such that $n_1+\cdots+n_m=n$, $\on{Cayley}_n(V,a)$ is the closed subscheme of $V$ defined by the ideal $\fm_{x_1}^{n_1} \ldots \fm_{x_m}^{n_m}$ of $k[V]$ where $\fm_{x_i}$ is the maximal ideal corresponding to the point $x_i\in V$. 

Let us denote $I_a$ the ideal of $A=k[V]$ defining the finite subscheme $\on{Cayley}_n(V,a)$ in $V$. We first prove that $I=I_{x_1}\ldots I_{x_n}$ where $A/I_{x_i}$ is supported by some finite thickening of the point $x_i$. For this we only need to prove that for every $x\notin\{x_1,\ldots,x_m\}$, there exists a function $f\in I_a$ such that $f\notin\fm_x$. We recall that the ideal $I_a$ is generated by thefunctions $f_t(a):V\to\bA^1$ as $t$ varies in $V^*$. Choose a linear form $t:V\to k$ such that $t(x)\neq t(x_i)$ for all $i\in\{1,\ldots,m\}$, then we have $f_t(a)(x)\neq 0$ by \eqref{fta}. 

As $x_1,\ldots,x_n$ play equivalent roles, it  only remains to prove that the images of the functions $f_t(a)$ in the localization $A_{x_1}$ of $A$ at $x_1$, as $t$ varies in $V^*$, generate the ideal $\fm_{x_1}^{n_1}$. From \eqref{fta}, we already know that $f_t(a)\in \fm_{x_1}^{n_1}$ for every $t\in V^*$. By the Nakayama lemma, we only need to prove that the images of $f_t(a)$ in $\fm_{x_1}^{n_1}/\fm_{x_1}^{n_1+1}$ generate this vector space as $t$ varies in $V^*$. We observe that for $t\in V^*$ such that $t(x_1)\neq t(x_i)$ for $i\in \{2,\ldots,m\}$, the factors $t(v)-t(x_2),\ldots t(v)-t(v_m)$ are all invertible at $x_1$, it is enough to prove that for $T\in V^*$ satisfying the open condition $t(x_1)\neq t(x_i)$ for $i\in \{2,\ldots,m\}$, the functions $(t(v)-t(x_1))^{n_1}$ generate $\fm_{x_1}^{n_1}/\fm_{x_1}^{n_1+1}$. Here we use again the fact the image of the $n$-th power map $\fm_x/\fm_x^2 \to \fm_x^n/\fm_x^{n+1}$ span $\fm_x^n/\fm_x^{n+1}$ and this conclusion doesn't change even after we remove from $\fm_x/\fm_x^2$ a closed subset of smaller dimension.

\item By the Chinese remainder theorem we are easily reduced to prove that if $E$ is a finite $A$-module of length $n$, supported by a finite thickening of $x\in V$ then $E$ is annihilated by $\fm_x^n$. Since $E$ is  supported by a finite thickening of $x\in V$ it has a structure of $A_x$-module where $A_x$ is the localization of $A$ at $x$. We consider the decreasing filtration $E \supset \fm_x E \supset \fm_x^2 E \supset \cdots$.  By the Nakayama lemma, we know that for $m\in\N$, $\fm_x^m E/\fm_x^{m+1}E=0$ implies $\fm_x^m E=0$. It follows that as long as $\fm_x^m E\neq 0$, we have $\dim_k(\fm_x^i E/\fm_x^{i+1}E)\geq 1$ for all $i\in \{0,\ldots,m\}$ and it follows that $m+1\leq n$. We conclude that $\fm_x^n E=0$. 
\end{enumerate}
This completes the proof of lemma \ref{Cayley}
\end{proof}

The morphism $p_n(V):\on{Cayley}_n(V)\to \Chow_n(V)$ being $\GL(V)$-equivariant, it can be twisted by any rank $d$ vector bundle. In particular, if  $X$ be a $d$-dimensional smooth variety over $k$, we have a finite morphism $p_n(\rT^*_X/X):\on{Cayley}_n(\rT^*_X/X)\to \Chow_n(\rT^*_X/X)$. For every spectral datum $a\in \cA_X$ corresponding to a section $a:X\to \Chow_n(\rT^*_X/X)$, we can pull back $p_n(\rT^*_X/X)$ to obtain the spectral cover
\begin{equation}
	p_a:Y_a \to X.
\end{equation}
Just as $p_n(\rT^*_X/X)$, the morphism $p_a:Y_a \to X$ is finite but in general not flat. The morphism $p_a:Y_a \to X$ factors through a canonical closed embedding $Y_a\to \rT^*_X$.

We will be specially interested in the open subset $\cA^\heartsuit_X$ of $\cA_X$ consisting of maps $a:X\to \Chow_n(\rT^*_X/X)$ that maps the generic point of $X$ to the open subset $\Chow_n'$ of multiplicity free 0-cycles. If $a\in\cA_X^\heartsuit$, the covering $p_a:Y_a\to X$ is generically finite étale of degree $n$.

If $X$ is a curve, and if the spectral curve $Y_a$ is integral, after Beauville-Narasimhan-Ramanan \cite{BNR}, there is an equivalence of categories between the category of Higgs bundles $(E,\theta)$ of spectral datum $a$ and the category of torsion-free $\cO_{\tilde X_a}$ of generic rank 1. This equivalence can be generalized to the case $d\geq 1$ with aid of the concept of Cohen-Macaulay sheaves. 

Let us recall some basic facts about Cohen-Macaulay sheaves. A coherent sheaf $M$ on a purely $n$-dimensional scheme $Y$  is said to be \emph{maximal Cohen-Macaulay} if $\dim(\Supp(M))=\dim(Y)$ and $\rH^i(\DD(M))=0$ for $i\neq n$. A family of maximal Cohen-Macaulay sheaves on $Y$ parametrized by $S$ is a coherent sheaf $M$ on $Y\times S$  flat over $S$ and satisfying the property: for every $s\in S$, the restriction $M_s$ to $Y\times \{s\}$ is maximal Cohen-Macaulay. If $S$ is Cohen-Macaulay, then the above conditions imply that $M$ itself is maximal Cohen-Macaulay. If $Y$ is proper, the functor that associates with every test scheme $S$ the groupoid of all families of Cohen-Macaulay sheaves on $Y$ parametrized by $S$ is an algebraic stack, see \cite[2.1]{Arinkin}. 

We also recall an important fact about Cohen-Macaulay module. Let $M$ be a Cohen-Macaulay $R$-module of finite type. Suppose that $R$ is a finite $A$-algebra with $A$ being a regular ring. Then $M$ is a locally free $A$-module of finite type. We refer to \cite[section 2]{BBG} for a nice discussion on Cohen-Macaulay modules and for further references therein, or the comprehensive treatment in \cite{BH}.

\begin{proposition}
	For every $a\in\cA^\heartsuit_X$, the Hitchin fiber $\Higgs_{X,a}=\on{sd}_X^{-1}(a)$ is isomorphic to the algebraic stack of maximal Cohen-Macaulay sheaves of generic rank one on the spectral cover $Y_a$.
\end{proposition}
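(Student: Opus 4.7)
The plan is to extend the Beauville--Narasimhan--Ramanan argument by constructing mutually quasi-inverse functors between the two groupoids and then showing compatibility with base change. In one direction, given a maximal Cohen-Macaulay sheaf $F$ on $Y_a$ of generic rank one, set $E=p_{a,*}F$ and let $\theta:\cT_X\to\End_{\cO_X}(E)$ be the Higgs field induced by the algebra map $\Sym_{\cO_X}(\cT_X)\to p_{a,*}\cO_{Y_a}$ arising from the closed embedding $Y_a\hookrightarrow \rT^*_X$; integrability of $\theta$ is automatic since $\theta$ factors through a commutative algebra. In the opposite direction, given $(E,\theta)\in \sd_X^{-1}(a)$, the integrability of $\theta$ makes $E$ into a module over $\Sym_{\cO_X}(\cT_X)$ whose annihilator, by the generalized Cayley--Hamilton statement of Lemma~\ref{Cayley}(3), contains the ideal defining $Y_a\hookrightarrow \rT^*_X$; this produces a coherent sheaf $F$ on $Y_a$ with $p_{a,*}F=E$.

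To see that the first functor lands in Higgs bundles with spectral datum $a$, we invoke the fact recalled just before the proposition: since $p_a:Y_a\to X$ is finite, $X$ is smooth and hence regular, and $F$ is maximal Cohen-Macaulay of dimension $d=\dim X$, the pushforward $p_{a,*}F$ is a locally free $\cO_X$-module. Its rank equals $n$ because on the open $X^\circ\subset X$ where $a$ lands in the multiplicity free locus $\Chow'_n(\rT^*_X/X)$ the cover $Y_a\to X$ is finite étale of degree $n$ by Lemma~\ref{Cayley}(1), so generic rank one upstairs forces generic rank $n$ downstairs. By the very definition of the spectral datum in terms of the $0$-cycle supported on each cotangent fiber, the Higgs bundle $(E,\theta)$ so obtained has spectral datum $a$.

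For the opposite direction, starting from $(E,\theta)\in \sd_X^{-1}(a)$, one must check that $F$ is maximal Cohen-Macaulay of generic rank one on $Y_a$. Generic rank one follows from the étaleness of $p_a$ over $X^\circ$ combined with the decomposition of $E|_{X^\circ}$ into joint eigenlines for the commuting operators $\theta_i$. For the Cohen-Macaulay property, observe that $p_{a,*}F=E$ is locally free on the regular scheme $X$ and so has depth $d$ at every point; since depth is preserved under the finite affine morphism $p_a$ and since $\dim Y_a=d$, the sheaf $F$ has depth $d=\dim Y_a$ at every point and is therefore maximal Cohen-Macaulay. That the two constructions are mutually inverse on objects is then immediate, since in both cases the underlying datum is the same coherent sheaf on $\rT^*_X$ scheme-theoretically supported on $Y_a$.

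It remains to upgrade the bijection on objects to an isomorphism of algebraic stacks, that is, to perform both constructions over an arbitrary test scheme $S$. For an $S$-family of Higgs bundles with spectral datum $a$, the corresponding $F$ on $Y_a\times S$ is $S$-flat because $E$ is, and its geometric fibers are maximal Cohen-Macaulay by the pointwise argument above. Conversely, given a family of maximal Cohen-Macaulay sheaves on $Y_a$ parametrized by $S$ in the sense of \cite[2.1]{Arinkin}, one needs the relative version of the local-freeness criterion for CM modules over a finite extension of a regular ring. This families statement is the main technical point of the proof: one must verify that pushforward along the finite morphism $p_a\times\id_S$ preserves both $S$-flatness and local freeness on $X\times S$, and symmetrically that a relative Higgs datum gives back an $S$-flat family of maximal Cohen-Macaulay sheaves on $Y_a$. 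Both directions reduce, via the framework of \cite{Arinkin} and the material on relative Cohen-Macaulay modules from \cite[Section 2]{BBG}, to the fiberwise assertions already verified combined with standard flatness arguments.
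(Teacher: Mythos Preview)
Your argument is correct and follows essentially the same route as the paper: in both directions one uses the generalized Cayley--Hamilton theorem to land on $Y_a$, the fact that a maximal Cohen--Macaulay module over a ring finite over a regular ring is locally free below, and the generic \'etaleness of $p_a$ to control the rank. Your write-up is in fact more complete than the paper's, which does not spell out the verification that the resulting Higgs bundle has spectral datum exactly $a$ nor the families version needed for the stack isomorphism.
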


\begin{proof}
	Let $(E,\theta)\in\Higgs_{X,a}$ a Higgs bundle of rank $n$ whose spectral datum is $a\in\cA^\heartsuit_X$. Then $E=p_* F$ where $F$ is a coherent sheaf over the cotangent $\rT^*_X$. By the Cayley-Hamilton theorem, $F$ is supported by the spectral cover $Y_a \subset \rT^*_X$. We have then $E=p_{a*} F$ where $F$ is a coherent sheaf on $Y_a$. Since $p_a:Y_a\to X$ is a finite morphism, and $E$ is a vector bundle over $X$, $F$ is a maximal Cohen-Macaulay sheaf. Moreover, since $p_a$ is generically finite étale of degree $n$, $F$ is of generic rank one. Inversely, if $F$ is a maximal Cohen-Macaulay sheaf of generic rank one over $Y_a$, then $E=p_{a*}F$ is a vector bundle of rank $n$ over $X$. It is naturally equipped with a Higgs field $\theta:E \otimes_{\cO_X}\cT_X \to E$ as $Y_a$ is a closed subscheme of $\rT^*_X$.
\end{proof}

In spite of the simplicity of the above description of $\Higgs_{X,a}$, it is not of great use by itself alone. For instance, it doesn't imply that $\Higgs_{X,a}$ is non empty. The difficulty is that in general for the spectral cover $Y_a$ is not Cohen-Macaulay, we do not know any recipe to construct coherent Cohen-Macaulay sheaves on $Y_a$. At this point, we see that in order to obtain a useful description of $\Higgs_{X,a}$, one needs to construct a finite Cohen-Macaulayfication of $Y_a$. This can be done in the surface case.

\section{Cohen-Macaulay spectral surfaces}

In the surface case, for every $a\in\cA_X^\heartsuit$, the spectral surface $Y_a$ admits a canonical finite Cohen-Macaulayfication
whose construction will rely on the Hilbert schemes of points on surfaces and Serre's theorem on extending vector bundles on smooth surfaces across a closed subscheme of codimension two. 

We first recall some well known fact about the Hilbert schemes of $0$-dimensional subschemes of a surface.
Let $\Hilb_n(\bA^2)$ denote the moduli space of zero-dimensional subschemes of length $n$ of $\bA^2$. A point of $\Hilb_n(\bA^2)$ is a $0$-dimensional subscheme $Z$ of $\bA^2$ of length $n$ that will be of the form $Z=\bigsqcup_{\alpha \in\bA^2} Z_\alpha$ where $Z_\alpha$ is a local $0$-dimensional subscheme of $\bA^2$ whose closed point is $\alpha$. It is known that the map 
\begin{equation} \label{Hilbert-Chow}
{\rm HC}_n:\Hilb_n(\bA^2) \to \Chow_n(\bA^2).
\end{equation}
given by $Z\mapsto \sum_{\alpha\in\bA^2} \length(Z_\alpha)\alpha$, where $\length(Z_\alpha)$ is the length of $Z_\alpha$, is a resolution of singularities of $\Chow_n$. If $\Chow'_n$ denotes the open subset of $\Chow_n$ classifying multiplicity-free $0$-cycle of length $r$ on $\bA^2$ then ${\rm HC}_n$ is an isomorphism over $\Chow'_n$. The complement $Q$ of $\Chow'_n$ in $\Chow_n$ is a closed subset of codimension 2.

As the morphism \eqref{Hilbert-Chow} is $\GL_2$-equivariant, we can twist it by any $\GL_2$-bundle, and in particular by the $\GL_2$-bundle associated to the cotangent bundle $\rT^*_X$ over a smooth surface $X$ and by doing so we obtain
\begin{equation}\label{HC-Omega}
	{\rm HC}_{\rT^*_X/X}:\Hilb_n(\rT^*_X/X)\to \Chow_n(\rT^*_X/X).
\end{equation}
This morphism is a proper morphism and its restriction the open subset $\Chow'_n(\rT^*_X/X)$ is an isomorphism. Here the open immersion $\Chow'_n(\rT^*_X/X) \subset \Chow_n(\rT^*_X/X)$ is obtained from the open subscheme $\Chow'_n$ of $\Chow_n$ classifying multiplicity free $0$-cycles by the process of $\GL_2$-twisting by the cotangent bundle. 

We will now recall Serre's theorem on extending locally free sheaves across a closed subscheme of codimension 2, see \cite[Prop. 7]{Serre}.

\begin{theorem}
	Let $X$ be a smooth surface over $k$, $Z$ a closed subscheme of codimension 2 of $X$ and $j:U\to X$ the open immersion of the complement $U$ of $Z$ in $X$. Then the functor $V\to j_*V$ is an equivalence of categories between the category of locally free sheaves on $U$ and locally free sheaves on $X$. Its inverse is the functor $j^*$. 
\end{theorem}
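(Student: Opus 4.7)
The plan is to reduce the statement to a purely local question at points of $Z$ and apply the depth-theoretic machinery: the Auslander-Buchsbaum formula together with the vanishing of local cohomology below the depth. The essential idea is that a locally free sheaf on the smooth surface $X$ automatically has depth $2$ at every point of $Z$, which equals the codimension, so that $U$ is ``large enough'' in a cohomological sense to reconstruct everything over $X$.

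For the easy side, the equality $j^*j_*V = V$ for any sheaf $V$ on $U$ is automatic since $j$ is an open immersion. For the counit $W \to j_*j^*W$ with $W$ locally free on $X$, I would invoke the exact sequence
\begin{equation*}
0 \to \mathcal{H}^0_Z(W) \to W \to j_*j^*W \to \mathcal{H}^1_Z(W) \to \cdots
\end{equation*}
coming from local cohomology along $Z$. Since $W$ is locally free and the local rings $\mathcal{O}_{X,x}$ at points $x\in Z$ are regular of dimension $2$, the depth of $W$ along $Z$ equals $2$, whence $\mathcal{H}^i_Z(W)=0$ for $i<2$ by the standard depth inequality, and the counit is an isomorphism.

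The main direction is to show that for $V$ locally free on $U$, the sheaf $j_*V$ is coherent and locally free on $X$. I would first extend $V$ to an arbitrary coherent sheaf $\tilde V$ on $X$, which is possible since $X$ is noetherian, and then pass to the double dual $\tilde V^{\vee\vee}$. This double dual is coherent, reflexive, and restricts to $V$ on $U$ since $V$ is already locally free there. The crucial input is then that on a two-dimensional regular scheme every reflexive coherent sheaf is locally free: at a closed point $x$ the stalk is a finitely generated reflexive module over the two-dimensional regular local ring $\mathcal{O}_{X,x}$, hence has $\depth \geq 2$ and in fact exactly $2$, and the Auslander-Buchsbaum formula $\pd(M)+\depth(M)=\dim R$ forces $\pd(M)=0$, i.e.\ freeness. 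Combining with the counit isomorphism applied to the locally free sheaf $\tilde V^{\vee\vee}$ identifies it with $j_*V$, which is thus locally free, and functoriality on both sides upgrades the bijection to an equivalence of categories.

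The main obstacle is precisely the reflexive-equals-locally-free step, which is where the surface hypothesis enters decisively: in dimension $\geq 3$ reflexive coherent sheaves can fail to be locally free, so the clean extension statement genuinely belongs to the two-dimensional world. A minor technical point is the a priori coherence of $j_*V$, which is bypassed neatly by first producing the locally free extension via the reflexive hull and only afterwards identifying it with $j_*V$.
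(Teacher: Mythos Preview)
Your argument is correct and follows the standard modern treatment of Serre's extension theorem: local cohomology and depth for the counit direction, and the reflexive hull together with Auslander--Buchsbaum for showing $j_*V$ is locally free. Each step is sound, including the key observation that reflexive coherent sheaves on a two-dimensional regular scheme are automatically locally free.

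There is, however, nothing to compare against: the paper does not give its own proof of this statement. It is quoted as a known theorem of Serre with a reference to \cite[Prop.~7]{Serre}, and is used as a black box in the subsequent constructions (notably in Proposition~\ref{CM-fication} and Proposition~\ref{Hitchin fibers}). So your write-up is not an alternative to the paper's argument but rather a self-contained justification of a result the paper simply cites.
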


We recall that a point $a\in \cA_X$ in the space of spectral data is a section $a:X\to \Chow_n(\rT^*_X/X)$. We have also defined $\cA^\heartsuit_X$ to be the open subset of $\cA_X$ of sections $a: X\to \Chow_n(\rT^*_X/X)$ mapping the generic point of $X$ to the open subset $\Chow'_n(\rT^*_X/X)$, in other words
\begin{equation} \label{A-heart}
	\cA^\heartsuit_X=\{a\in\cA_X| \dim a^{-1}(Q_X)\leq 1\}
\end{equation}
where $Q_X$ is the complement of $\Chow'_n(\rT^*_X/X)$ in  $\Chow_n(\rT^*_X/X)$.

\begin{proposition}\label{CM-fication}
	For every $a\in \cA^\heartsuit_X$, there exists a unique finite flat covering 
	\begin{equation}\label{CM surfaces}
		\tilde p_a:\tilde Y_a\to X
	\end{equation}
	 of degree $n$, equipped with a $X$-morphism $\iota_a:\tilde Y_a \to \rT^*_X$ satisfying the following property: there exists an open subset $U\subset X$, whose complement is a closed subset of codimension at least 2, such that $\iota_a$ is a closed embedding over $U$ and for every $x\in U$, the fiber $\tilde Y_a(x)$ is a point of $\Hilb_n(\rT^*_X/X)(x)$ laying over the point $a(x)\in \Chow_n(\rT^*_X/X)(x)$. Moreover, the morphism $\iota: \tilde Y_a\to \rT^*_X$ factors through the closed subscheme $Y_a$ of $\rT^*_X$ and the implied morphism $\tilde Y_a\to Y_a$ is a finite Cohen-Macaulayfication of $Y_a$.
\end{proposition}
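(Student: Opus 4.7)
The plan is to build $\tilde Y_a$ in two stages: first on a dense open $U \subset X$ with $X \setminus U$ of codimension $\geq 2$, as the pullback of the universal length-$n$ family over the relative Hilbert scheme; second, to extend across the missing codimension-$2$ points by Serre's theorem.

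For the first stage, set $V = X \setminus a^{-1}(Q_X)$. Because $\mathrm{HC}_{\rT^*_X/X}$ is an isomorphism over $\Chow'_n(\rT^*_X/X)$, the section $a|_V$ lifts uniquely to $\tilde a_V : V \to \Hilb_n(\rT^*_V/V)$. Since $a \in \cA_X^\heartsuit$, the locus $a^{-1}(Q_X)$ has dimension $\leq 1$ and so only finitely many codimension-$1$ generic points $\eta_1,\dots,\eta_r$ in $X$. Each $\cO_{X,\eta_i}$ is a DVR, and because $\mathrm{HC}_{\rT^*_X/X}$ is proper (being a $\GL_2$-twist of the proper morphism $\mathrm{HC}_n : \Hilb_n(\bA^2) \to \Chow_n(\bA^2)$), the valuative criterion produces a unique lift of $a|_{\Spec \cO_{X,\eta_i}}$ to $\Hilb_n$ agreeing with $\tilde a_V$ at the generic point of $X$. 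This lift is defined on an open $W_i \ni \eta_i$ and glues with $\tilde a_V$ by separatedness of the Hilbert scheme. Setting $U := V \cup \bigcup_i W_i$, the complement $X \setminus U$ is finite, so of codimension $\geq 2$, and we have a lift $\tilde a : U \to \Hilb_n(\rT^*_U/U)$. Pulling back the universal subscheme $\cZ \subset \Hilb_n(\rT^*_X/X) \times_X \rT^*_X$ along $\tilde a$ produces a finite flat degree-$n$ morphism $\tilde p_U : \tilde Y_U \to U$ together with a closed embedding $\iota_U : \tilde Y_U \hookrightarrow \rT^*_U$.

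For the second stage, $\tilde E_U := \tilde p_{U*}\cO_{\tilde Y_U}$ is locally free of rank $n$ on $U$. Serre's theorem, applied to $j : U \to X$, yields a locally free rank-$n$ extension $\tilde E := j_*\tilde E_U$ on $X$. The multiplication, unit, and the $\cO_U$-linear map $\cT_U \to \tilde E_U$ induced by $\iota_U$ are sections of locally free sheaves on $U$ that extend uniquely to corresponding sections on $X$ by the same theorem; the commutativity, associativity, and unit axioms then extend as well, since they are equalities of sections of locally free sheaves valid on the codimension-$\geq 2$ complement $U$. Thus $\tilde E$ acquires a commutative $\cO_X$-algebra structure of constant rank $n$, and we set $\tilde p_a : \tilde Y_a := \Spec_X \tilde E \to X$, a finite flat degree-$n$ cover. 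The extended map $\cT_X \to \tilde E$ determines, via the universal property of $\Sym$, a morphism $\iota_a : \tilde Y_a \to \rT^*_X$ agreeing with $\iota_U$ over $U$ and hence a closed embedding there.

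For the factorization through $Y_a$: at each $x \in U$ the fiber $\tilde Y_a(x)$ is a length-$n$ subscheme of $\rT^*_X(x)$ with associated cycle $a(x)$, so by Lemma~\ref{Cayley}(3) it is supported in $\on{Cayley}_n(\rT^*_X, a(x)) = Y_a(x)$; thus $\iota_a|_U$ factors through $Y_a$, and since $Y_a \hookrightarrow \rT^*_X$ is closed and $U$ schematically dense in $X$, the factorization extends globally. The scheme $\tilde Y_a$ is Cohen-Macaulay as it is finite flat over the smooth surface $X$, and the induced finite morphism $\tilde Y_a \to Y_a$ is an isomorphism over the dense open $V$, providing the promised Cohen-Macaulayfication. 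Uniqueness follows by reversal: any other pair $(\tilde Y'_a, \iota'_a)$ supplies a lift of $a$ to $\Hilb_n$ on its defining open $U'$ which must coincide with $\tilde a$ on $U \cap U'$ (they agree at the generic point of $X$ where the cycle is multiplicity-free, hence agree on $U \cap U'$ by separatedness of $\Hilb_n$), and Serre's theorem then identifies the algebras $\tilde E$ and $\tilde E'$. I expect the main obstacle to be the codimension-$1$ patching step: the valuative-criterion lifts at each $\eta_i$ must be glued coherently with $\tilde a_V$ into a single $\tilde a$ on $U$, for which one needs both the properness of the relative Hilbert--Chow morphism and the separatedness of the Hilbert scheme. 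Once a lift on a codimension-$\geq 2$ complement is in hand, Serre's theorem propagates all sheaves and algebraic structures essentially mechanically.
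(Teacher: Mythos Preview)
Your proof is correct and follows essentially the same strategy as the paper's: lift $a$ to the relative Hilbert scheme over an open set with codimension-$2$ complement using properness of the Hilbert--Chow morphism, then extend the resulting finite flat cover via Serre's theorem, and finally invoke Cayley--Hamilton for the factorization through $Y_a$. Your version is more explicit in two places --- you spell out the extension across height-one points via the valuative criterion (the paper simply cites properness to get the codimension-$2$ extension of the section) and you provide a uniqueness argument that the paper omits --- while the paper's application of Cayley--Hamilton is slightly cleaner, applying it directly to the global vector bundle $\tilde p_{a*}\cO_{\tilde Y_a}$ rather than pointwise over $U$ followed by a density argument.
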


\begin{proof}
	Let $U'$ be the preimage of $\Chow'_n(\rT^*_X/X)$ by the section $a:X\to \Chow_n(\rT^*_X/X)(x)$. By assumption $a\in \cA^\heartsuit$, $U'$ is a non empty open subset of $X$. As the morphism $\HC_{\rT^*_X/X}$ of \eqref{HC-Omega} is an isomorphism over $\Chow_n(\rT^*_X/X)$, we have a unique lifting 
	$$b':U'\to \Hilb_n(\rT^*_X/X)\times_X U'$$ 
	laying over the restriction $a$ to $U'$. 
	
	Since the Hilbert-Chow morphism \eqref{HC-Omega} is proper, there exists an open subset $U \subset X$, larger than $U'$, whose complement $X-U$ is a closed subscheme of codimension at least 2, such that $b':U'\to \Hilb'_n(\rT^*_X/X)\times_X U'$ extends to 
	$$b_U:U\to \Hilb_n(\rT^*_X/X)\times_X U.$$
	By pulling back from $\Hilb_n(\rT^*_X/X)$ the tautological family of subschemes of $\rT^*_X$, we get a finite flat morphism $\tilde U_a \to U$ of degree $n$, equipped with a closed embedding $\iota_U:\tilde U_a \to \Omega_{X}\times_X U$.  
	
	According to Serre's theorem on extending vector bundles over surfaces, there exists a unique the finite flat covering $\tilde Y_a\to X$ of degree $n$ extending the covering $\tilde U_a$ of $U$. The closed embedding $\tilde U_a\to \Omega_{X}\times_X U$ extends to a finite morphism $\tilde Y_a \to \rT^*_X$ which may no longer be a closed embedding.
	
	For by construction $\tilde p_a:\tilde Y_a \to X$ is a finite flat morphism of degree $n$, $\tilde Y_a$ is Cohen-Macaulay. Apply the generalized Cayley-Hamilton theorem to the vector bundle $\tilde p_{a*}\cO_{\tilde Y_a}$, as $\cO$-module over $\rT^*_X$, it is supported by $Y_a$. It follows that the morphism $\tilde Y_a\to \rT^*_X$ factors through $Y_a$. Since $\tilde Y_a$ is finite over $X$, it is also finite over $Y_a$. As the finite morphism $\tilde Y_a\to Y_a$ is an isomorphism over the nonempty open subset $U'$, it is a finite Cohen-Macaulayfication of $Y_a$.
\end{proof}

Instead of using the Hilbert scheme, we can construct $Y_a$ over the height one point as follows. Let $U'=a^{-1}(\Chow'_n(\rT^*_X))$ and $Z'$ le complement of $U'$. Let $z$ be the generic point of an irreducible component of $Z'$. The localization of $X$ at $x$ is $X_z=\Spec(\cO_{X,z})$ where $\cO_{X,z}$ is a discrete valuation ring. By restricting $p_{a*} \cO_{Y_a}$ to $\cO_{X,z}$ we get a module of finite type which may have torsion. By considering the quotient $\Spec(p_{a*} \cO_{Y_a}/(p_{a*} \cO_{Y_a})^{\rm tors})$ we obtain a section $X_z\to \Hilb_n(\rT^*_X/X)\times_X X_z$ over $a|_{X_z}$ and by uniqueness of such a section we have:
\begin{equation}\label{torsion-free-quotient}
	\Spec(p_{a*} \cO_{Y_a}/(p_{a*} \cO_{Y_a})^{\rm tors})= \Spec(\tilde p_{a*} \cO_{\tilde Y_a}).
\end{equation}

\begin{proposition}\label{Hitchin fibers}
For every $a\in \cA^\heartsuit_X$, the fiber $\Higgs_{X,a}$ consists in the moduli stack of Cohen-Macaulay sheaves $F$ over the Cohen-Macaulay spectral surface $\tilde Y_a$  of generic rank one. It contains in particular the Picard stack $\cP_a$ of line bundles on $\tilde Y_a$. The action of $\cP_a$ on itself by translation extends to an action of $\cP_a$ on $\Higgs_{X,a}$.
\end{proposition}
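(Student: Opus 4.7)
The plan is to upgrade the description of $\Higgs_{X,a}$ furnished by the previous proposition---which identifies it with maximal Cohen--Macaulay sheaves of generic rank one on $Y_a$---by transferring it along the finite morphism $\iota_a:\tilde Y_a\to Y_a$ from Proposition~\ref{CM-fication}. The key inputs are: $\iota_a$ is an isomorphism over a dense open $U'\subset X$; the identification \eqref{torsion-free-quotient} of $\tilde p_{a*}\cO_{\tilde Y_a}$ as the $\cO_X$-torsion-free quotient of $p_{a*}\cO_{Y_a}$; and the structure theorem for Cohen--Macaulay modules finite over a regular ring, ensuring that pushforwards to the regular surface $X$ are vector bundles.

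First I would establish an equivalence of groupoids between maximal Cohen--Macaulay sheaves of generic rank one on $Y_a$ and Cohen--Macaulay sheaves of generic rank one on $\tilde Y_a$, mediated by $\iota_{a*}$. In the easy direction, $\iota_{a*}$ sends a Cohen--Macaulay sheaf on $\tilde Y_a$ to a maximal Cohen--Macaulay sheaf on $Y_a$ of the same generic rank, since $\iota_a$ is an isomorphism over the dense open $U'$ and finite pushforward preserves depth. For the converse, let $F$ be maximal Cohen--Macaulay of generic rank one on $Y_a$; its pushforward $E=p_{a*}F$ is a vector bundle on $X$, hence $\cO_X$-torsion-free. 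Consequently the $\cO_X$-algebra action of $p_{a*}\cO_{Y_a}$ on $E$ kills the $\cO_X$-torsion ideal: if $rt=0$ for some non-zero-divisor $r\in\cO_X$ and any $e\in E$, then $r(te)=(rt)e=0$ forces $te=0$. By \eqref{torsion-free-quotient} the action therefore factors through $\tilde p_{a*}\cO_{\tilde Y_a}$, which endows $F$ with a canonical $\cO_{\tilde Y_a}$-module structure $\tilde F$. The two constructions are mutually quasi-inverse.

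For the Picard stack assertion, any line bundle $\cL$ on the Cohen--Macaulay surface $\tilde Y_a$ is automatically a maximal Cohen--Macaulay sheaf of generic rank one, so $\cP_a$ embeds as a substack of $\Higgs_{X,a}$. The desired extension of the translation action is given by tensor product $(\cL,\tilde F)\mapsto\cL\otimes_{\cO_{\tilde Y_a}}\tilde F$: tensoring with a line bundle preserves both the Cohen--Macaulay property and the generic rank, and restricts on $\cP_a\subset\Higgs_{X,a}$ to the standard multiplication in the Picard stack.

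The step I expect to be the main obstacle is verifying the torsion-free factoring globally on $X$. The explicit argument behind \eqref{torsion-free-quotient} is carried out at height-one points, so one must extend the resulting $\tilde p_{a*}\cO_{\tilde Y_a}$-module structure on $F$ across the codimension-two bad locus, invoking Serre's theorem together with the fact that both $E$ and $\tilde p_{a*}\cO_{\tilde Y_a}$ are vector bundles on the smooth surface $X$.
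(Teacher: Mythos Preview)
Your proposal is correct and follows essentially the same route as the paper. You phrase the argument as an equivalence between Cohen--Macaulay sheaves on $Y_a$ and on $\tilde Y_a$ (mediated by $\iota_{a*}$) after invoking the previous proposition, whereas the paper works directly with a Higgs bundle $(V,\theta)$ and factors the algebra map $p_{a*}\cO_{Y_a}\to\End_X(V)$ through $\tilde p_{a*}\cO_{\tilde Y_a}$; but the substantive steps---the torsion-freeness of $\End_X(V)$ forcing the factoring at height-one points via \eqref{torsion-free-quotient}, the extension across codimension two by Serre's theorem, and the tensor-product action of $\cP_a$---are identical, and you have correctly anticipated that the Serre extension step is where the real work lies.
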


\begin{proof}
Let $(V,\theta)\in\Higgs_X$ be a Higgs bundle over $X$ whose spectral datum is $a\in\cA_X^\heartsuit$. The Higgs field $\theta:V\otimes\cT_{X}\to V$ define a homomorphism $\Sym_X(\cT_X)\to \End_X(V)$ which factors through $p_{a*} \cO_{Y_a}$ by the generalized Cayley-Hamilton theorem. 

Let $U'=a^{-1}(\Chow'_n(\rT^*_X))$ and $Z'$ le complement of $U'$. Let $z$ be the generic point of an irreducible component of $Z'$. The localization of $X$ at $x$ is $X_z=\Spec(\cO_{X,z})$ where $\cO_{X,z}$ is a discrete valuation ring. Over $X_z$ we have a homomorphism
$$p_{a*} \cO_{Y_a}\otimes \cO_{X_z} \to \End_{X}(V)\otimes \cO_{X_z}.$$
Since the target is clearly torsion free, this homomorphism factors through \eqref{torsion-free-quotient}. Thus over an open subset $U\subset X$ whose complement is of codimension two, the above morphism factors through a homomorphism of algebras
$$\tilde p_{a*} \cO_{\tilde Y_a} \otimes \cO_U \to \End_{X}(V)\otimes \cO_{U}.$$
By applying Serre's theorem again, we have a canonical homomorphism
$$\tilde p_{a*} \cO_{\tilde Y_a} \to \End_X(V)$$
so that $V=\tilde p_{a*} F$ where $F$ is a Cohen-Macaulay $\cO_{\tilde Y_a}$-module of generic rank one. 

Since $\tilde p_a:\tilde Y_a\to X$ is finite and flat, for every line bundle $L$ on $\tilde Y_a$, $p_{a*} L$ is a vector bundle of rank $n$ carrying a Higgs field. Thus $\cP_a \subset \Higgs_{X,a}$. We have an action of $\cP_a$ on $\Higgs_{X,a}$ defined by $(L,F) \mapsto L\otimes_{\cO_{\tilde Y_a}} F$ where $L$ is a line bundle on $\tilde Y_a$ and $F$ is a Cohen-Macaulay sheaf of generic rank one.
\end{proof}

\begin{remark}
Let $a\in\sA_X^\heartsuit$ such that the Cohen-Macaulay surface $\tilde Y_a$ is integral.
According to \cite{AK},
the moduli space $\on{Pic}(\tilde Y_a)^{-}$
of 
Cohen-Macaulay sheaves on $\tilde Y_a$ of generic rank one (which by the proposition above is the same as the rigidified Hitchin fiber over $a$) admits a compactification $\Pic(\tilde Y_a)^{=}$ given by the moduli space of 
of torsion free rank 
one sheaves on $\tilde Y_a$. On the other hand, 
by \cite[Theorem 6.11]{Simpson 2},  
the (extended) Hitchin map $h_X^{=}:\on{Higgs}^{=}_X\to\sB_X$ from 
$\on{Higgs}^{=}_X$, the moduli space  of torsion free Higgs sheaves on $X$, to 
$\sB_X$
is proper. One can show that the fiber $(h_X^{=})^{-1}(a)$ is isomorphic to 
$\Pic(\tilde Y_a)^{=}$.
\end{remark}

\begin{proposition}
	Let $a\in\cA^\heartsuit_X$ such that the surface $\tilde Y_a$ is normal, the reduced rigidified neutral component 
$(\cP^0_{a})_{\on{red}}$ of $\cP_a$ is a quotient of abelian variety by $\Gm$ acting trivially.\footnote{
If the characteristic of $k$ is zero, then $\cP_a$ is smooth and
the reducedness assumption is automatic.}
\end{proposition}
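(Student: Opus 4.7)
My plan is to unwind the rigidified Picard stack to reduce the statement to showing that $\on{Pic}^0(\tilde Y_a)_{\on{red}}$ is an abelian variety, then attack this via a resolution of singularities argument, exploiting the normality of $\tilde Y_a$.

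First I would note that, by Proposition~\ref{Hitchin fibers}, the stack $\cP_a$ is the Picard stack of $\tilde Y_a$, whose inertia is canonically $\Gm$ acting trivially on every line bundle. Rigidifying by this $\Gm$ produces the Picard scheme $\on{Pic}(\tilde Y_a)$, whose reduced neutral component is $\on{Pic}^0(\tilde Y_a)_{\on{red}}$. The assertion thus reduces to showing that this group scheme is an abelian variety (the $\Gm$ reappears in the statement precisely as the inertia we just rigidified away).

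Since $X$ is projective and $\tilde p_a\colon \tilde Y_a \to X$ is finite, $\tilde Y_a$ is a projective surface; it is normal by hypothesis. I would then fix a resolution of singularities $\pi\colon Y' \to \tilde Y_a$ with $Y'$ smooth projective, which is available by classical surface resolution (Zariski, Abhyankar, Lipman) in any characteristic. Normality together with birationality of $\pi$ yields $\pi_*\cO_{Y'}=\cO_{\tilde Y_a}$, and the projection formula gives $\pi_*\pi^*L = L$ for any line bundle $L$ on $\tilde Y_a$. Running this argument after base change to an arbitrary $k$-algebra (flat base change preserves $\pi_*\cO_{Y'}$), one concludes that the homomorphism of commutative group schemes $\pi^*\colon \on{Pic}^0(\tilde Y_a) \to \on{Pic}^0(Y')$ has trivial kernel as a functor, hence is a monomorphism and therefore a closed immersion on reduced neutral components. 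Since $Y'$ is smooth projective, $\on{Pic}^0(Y')$ is an abelian variety, so $\on{Pic}^0(\tilde Y_a)_{\on{red}}$ embeds as a connected closed subgroup scheme of an abelian variety; any such subgroup is proper, and a reduced connected proper group scheme is an abelian subvariety, concluding the proof.

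The principal technical obstacle is upgrading the vanishing of the kernel from $k$-points to the functorial level; everything turns on the equality $\pi_*\cO_{Y'}=\cO_{\tilde Y_a}$ remaining valid after arbitrary base change, which is the concrete incarnation of the normality hypothesis. As the footnote records, in characteristic zero the reduction step is automatic by Cartier's theorem, so $\on{Pic}^0(\tilde Y_a)$ is already smooth and one can drop the subscript ${}_{\on{red}}$ throughout.
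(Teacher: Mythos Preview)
Your proof is correct, but takes a genuinely different route from the paper's. The paper invokes a theorem of Geisser characterizing the affine part of $\on{Pic}^0(Y)_{\on{red}}$ for a proper variety $Y$: the multiplicative part vanishes if and only if $\rH^1_{\on{et}}(Y,\Z)=0$, and the unipotent part vanishes if and only if $Y$ is seminormal. The paper then observes that normality of $\tilde Y_a$ forces both conditions (for the first, $\pi_1(\tilde Y_a)$ is profinite as a quotient of the Galois group of the generic point, hence admits no nontrivial continuous homomorphism to $\Z$), so $\on{Pic}^0(\tilde Y_a)_{\on{red}}$ has no affine part and is an abelian variety. Your argument instead produces a closed immersion of $\on{Pic}^0(\tilde Y_a)_{\on{red}}$ into the Picard variety of a smooth resolution, using $\pi_*\cO_{Y'}=\cO_{\tilde Y_a}$ and its stability under base change---which does hold, since every $k$-scheme is flat over the field $k$, so flat base change suffices. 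This is more elementary in that it avoids Geisser's cohomological criterion, at the cost of invoking resolution of singularities for surfaces; it also makes visible that normality enters precisely through Zariski's main theorem. One small correction: in positive characteristic $\on{Pic}^0(Y')$ need not be reduced even when $Y'$ is smooth projective, so strictly speaking you should embed into $\on{Pic}^0(Y')_{\on{red}}$, which \emph{is} an abelian variety; since your source is already reduced the factorization is automatic and the rest of the argument goes through unchanged.
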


\begin{proof}
	This is a consequence of a theorem of Geisser \cite[Theorem 1]{Geisser}. Geisser's theorem states that the multiplicative part the reduced Picard variety $P_a^0$ of an algebraic variety $Y$ is trivial if and only if $\rH^1_{et}(Y,\Z)$ is trivial whereas the unipotent part is trivial if and only $Y$ is seminormal. If $Y$ is normal, $\pi_1(Y)$ is a profinite group, quotient of the Galois group of the generic point, and therefore can't afford a nontrivial continuous homomorphism to $\Z$. It follows that $\rH^1(Y,\Z)$ is trivial. On the other hand, a normal variety is certainly also seminormal. Now after Geisser, $P_a^0$ is an abelian variety. We have $(\cP_a^0)_{\on{red}}=[P_a^0/\Gm]$.
\end{proof}

\section{Abelian surfaces}\label{Ab surfaces}
We study the Hitchin fibration for abelian surfaces. Let $X$ 
be an abelian surface, that is,
a two dimensional abelian variety. 
Since $X$ is an algebraic group, we have a canonical trivialization of 
the cotangent bundle $\rT^*_X\cong X\times V$, where $V=\rT^*_X|_e$ is the fiber of $\rT^*_X$
at the 
identity $e\in X$. 
The relative Chow variety 
$\on{Chow}_n(\rT^*_X/X)$ can be identified with $X\times\on{Chow}_n(V)$ and, since 
both 
$\on{Chow}_n(V)$ and $V\times\Sym^2V\times\cdot\cdot\cdot\times
\Sym^nV$
are affine, we have 
\beq\label{iso with chow_n}
\mathscr A_X\is\on{Chow}_n(V),\ \ \mathscr A_X^\heartsuit\is\on{Chow}'_n(V),
\ \ \mathscr B_X\is V\times\Sym^2V\times\cdot\cdot\cdot\times
\Sym^nV.
\eeq
In particular, we see that 
$\mathscr A_X^\heartsuit$ is open dense in $\mathscr A_X$ and 
$\mathscr A_X\subsetneq\mathscr B_X$ 
is a proper subset 
for $n\geq 2$.

Under the above isomorphisms
the spectral data map and Hitchin map morphism become
\[\on{sd_X}:\on{Higgs}_X\to\on{Chow}_n(V),\ \ h_X:\on{Higgs}_X\to V\times\Sym^2V\times\cdot\cdot\cdot\times
\Sym^nV\]
and 
the factorization of the Hitchin map in (\ref{factorization-Hitchin}) becomes
\[h_X:\on{Higgs}_X\stackrel{\on{sd}_X}\to
\on{Chow}_n(V)\stackrel{\iota}\to
V\times\Sym^2V\times\cdot\cdot\cdot\times
\Sym^nV.\]
Here $\iota$ is the closed embedding in lemma \ref{closed-immersion}.

We claim that the spectral data map $\on{sd_X}$ is surjective. Let 
$a\in\mathscr A_X=\on{Chow}_n(V)$. Choose a point 
$Z=\bigsqcup_{\alpha\in V}Z_\alpha\in\on{Hilb}_n(V)$ 
such that $\on{HC}_n(Z)=a$, here $Z_\alpha$ is a local zero dimensional subscheme
of $V$ whose closed point is $\alpha$.
Consider the 
closed subscheme
\[X\times Z=\bigsqcup_{\alpha\in V}X\times Z_\alpha\subset\rT_X^*=X\times V.\]
Then the rank $n$ bundle $E:=pr_{X*}\mO_{X\times Z}$ ($pr_X:\rT^*_X\to X$) is naturally 
equipped with a Higgs field $\theta$ and it follows from the construction that the spectral datum of the Higgs bundle $(E,\theta)$ 
is equal to $a$. The claim follows.

Let $a=[x_1,...,x_n]\in\mathscr A_X^\heartsuit=\on{Chow}'_n(V)$. The 
associated 
spectral surface $Y_a$ is given by
\[Y_a=\bigsqcup_{i=1}^n X\times\{x_i\}\subset X\times V\is\rT_X^*.\]
In particular, $Y_a$ is smooth and is isomorphic to 
the finite Cohen-Macaulayfication $\tilde Y_a$. Since $Y_a$ is smooth, Cohen-Macaulay sheaves on
$Y_a$ are locally free and it follows from proposition \ref{Hitchin fibers} that 
the fiber $\on{Higgs}_{X,a}$ is isomorphic to 
\[\on{Higgs}_{X,a}\is\sP ic(Y_a)\is
\sP ic(X\times\{x_1\})\times\cdot\cdot\cdot\times\sP ic(X\times\{x_n\}).\]

\begin{remark}
The discussion above can be easily generalized to higher dimensional 
abelian varieties.
\end{remark}

\section{Surfaces fibered over a curve}
In this section we study the spectral surfaces $Y_a$ and the Cohen-Macaulay spectral surface $\tilde Y_a$ in the case when $X$ is a fibration over a 
curve $C$. The results of this section will be used later in the study of 
Hitchin fibration for ruled and elliptic surfaces.

Let $X$ be a smooth projective surface and let 
$C$ be a smooth projective curve. Assume there is a proper flat 
surjective map $\pi:X\to C$ such that the generic fiber a smooth projective curve.
We denote by $X^0\subset X$ the largest open subset such that $\pi$ is smooth.

Consider the cotangent morphism $d\pi:\rT_C^*\times_CX\to \rT_X^*$.
It induces a map \[[d\pi]:\on{Chow}_n(\rT_C^*/C)\times_CX
\to\on{Chow}_n(\rT_X^*/X)\]
on the relative Chow varieties. 
For every section 
$a_C:C\to\on{Chow}_n(\rT_C^*/C)$, the composition 
$$a_X:X\stackrel{}\is C\times_CX\stackrel{a_C\times\on{id}_X}\to\on{Chow}_n(\rT_C^*/C)\times_CX\stackrel{[d\pi]}\to\on{Chow}_n(\rT_X^*/X)$$
is a section of $\on{Chow}_n(\rT_X^*/X)\to X$ and the assignment $a_C\to a_X$
defines a map on the spaces of spectral data
\beq\label{embedding}
\iota_\pi:\mathscr A_C\to\mathscr A_X.
\eeq
We claim that the map above is an embedding. To see this we observe that 
there is a commutative diagram 
\beq\label{maps between bases}
\xymatrix{\mathscr A_C\ar[r]^{\iota_\pi}\ar[d]^{\iota_C}&\mathscr A_X\ar[d]^{\iota_X}\\
\mathscr B_C\ar[r]^{j_\pi}&\mathscr B_X}
\eeq
where the vertical arrows are the embeddings in (\ref{factorization-Hitchin}),
and the bottom arrow is the embedding 
\[j_\pi:\mathscr B_C=\bigoplus_{i=1}^n\rH^0(C,S^i\Omega_C^{1})\hookrightarrow\mathscr B_{X}=
\bigoplus_{i=1}^n\rH^0(X,S^i\Omega^1_X)\]
induced by the  injection 
$\rH^0(C,S^i\Omega_C^1)= 
\rH^0(X,\pi^*S^i\Omega_C^1)\to\rH^0(X,S^i\Omega_X^1)$. 
The claim follows. 
Note that, since $\on{dim} C=1$, the left vertical arrow in (\ref{maps between bases}) is in fact an isomorphism. 
From now on we will view $\mathscr A_C$ 
as subspace of $\mathscr A_X$.

Since the cotangent map 
$d\pi:\rT_C^*\times_CX\to\rT_X^*$ is a closed imbedding over the 
open locus $X^0$, we have $\mathscr A_C^\heartsuit=\mathscr A_C\cap\mathscr A_X^\heartsuit$. 

For any $a\in\mathscr A_C$,
we denote by 
$C_a\to C$ the corresponding spectral curve and we define  
$\tilde X_a=C_a\times_CX$. 
The natural projection
map $\pi_a:\tX_a\to X$ is 
finite flat of degree $n$ and it implies $\tilde X_a$ is a Cohen-Macaulay surface.

\begin{lemma}\label{Y_a}
There exits a finite $X$-morphism
$\tilde\pi_a:\tilde X_a\to Y_a$ which is a generic isomorphism
if $a\in\mathscr A_C^\heartsuit$. If the fibration $\pi:X\to C$ has only reduced fibers, then 
for any $a\in\mathscr A_C^\heartsuit$, the map 
$\tilde\pi_a:\tilde X_a\to Y_a$ is isomorphic to the 
finite Cohen-Macaulayfication
$\tilde p_a:\tilde Y_a\to Y_a$ in (\ref{CM surfaces}) (which is well-defined since $a\in\mathscr A_X^\heartsuit$).

\end{lemma}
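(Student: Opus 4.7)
The plan is to construct $\tilde\pi_a$ by pulling back the spectral curve $C_a\hookrightarrow\rT^*_C$ along $\pi$, embedding the result into $\rT^*_X$ via the cotangent map, and then invoking the generalized Cayley--Hamilton theorem; the final identification with the Cohen--Macaulayfication will be an appeal to the uniqueness assertion in Proposition~\ref{CM-fication}.

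Concretely, I would base change the closed embedding $C_a\hookrightarrow\rT^*_C$ along $\pi$ and compose with $d\pi$ to obtain an $X$-morphism
\[
\psi\colon\tilde X_a=C_a\times_CX\longrightarrow\rT^*_C\times_CX\xrightarrow{\ d\pi\ }\rT^*_X.
\]
Since $\pr_X\circ\psi=\pi_a$ is finite (as a base change of the finite $p_a\colon C_a\to C$) and $\pr_X$ is separated, $\psi$ itself is finite. The direct image $\psi_*\cO_{\tilde X_a}$ is a coherent sheaf on $\rT^*_X$ whose push-forward to $X$ is the rank $n$ bundle $E=(\pi_a)_*\cO_{\tilde X_a}\simeq\pi^*(p_a)_*\cO_{C_a}$, and its $\Sym\cT_X$-action factors through the algebra map $\Sym\cT_X\to\pi^*\Sym\cT_C$ induced by $d\pi$; this is precisely the Higgs field on $E$ whose spectral datum, by the very definition of $\iota_\pi$, equals $a_X=\iota_\pi(a)$. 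Applying Lemma~\ref{Cayley}(3) fibrewise over $X$, and using the affineness of $\pr_X$ to globalize the resulting fibrewise annihilation, shows that $\psi_*\cO_{\tilde X_a}$ is annihilated by the ideal of $Y_a$, so $\psi$ factors through a finite $X$-morphism $\tilde\pi_a\colon\tilde X_a\to Y_a$.

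For the generic isomorphism statement, assume $a\in\mathscr A_C^\heartsuit$. Over the open $U_C\subset C$ on which $a_C$ is multiplicity-free, $p_a$ is étale of degree $n$, so $\tilde X_a\to X$ is étale over the dense open $U:=\pi^{-1}(U_C)\cap X^0$. On $U$ the cotangent map $d\pi$ is a closed embedding, so $\psi|_U$ identifies $\tilde X_a|_U$ with the disjoint union of the $n$ sections of $\rT^*_X|_U$ cut out by $a_X|_U$; these sections are exactly $Y_a|_U$, hence $\tilde\pi_a$ restricts to an isomorphism on $U$.

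Finally assume that $\pi$ has only reduced fibres. Since the generic fibre is smooth, the discriminant $D\subset C$ is finite, and each reduced fibre over a point of $D$ has only finitely many singular points, so $X\setminus X^0$ is a finite set, hence of codimension $\geq 2$ in $X$. The map $\pi_a$ being finite flat of degree $n$ over the regular surface $X$, the scheme $\tilde X_a$ is Cohen--Macaulay. Over $X^0$, $d\pi$ is a closed embedding, so $\psi|_{X^0}$ is a closed embedding realizing, for every $x\in X^0$, the fibre $\tilde X_a(x)$ as a length-$n$ subscheme of $\rT^*_X(x)$ whose Hilbert--Chow image is $a_X(x)$. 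These are exactly the properties characterizing the Cohen--Macaulayfication in Proposition~\ref{CM-fication}, and by uniqueness $(\tilde X_a,\psi)\simeq(\tilde Y_a,\iota_a)$, so $\tilde\pi_a=\tilde p_a$. The main obstacle in the plan is the construction step: one has to correctly identify $\psi_*\cO_{\tilde X_a}$ with the Higgs sheaf of $(E,\theta)$ of spectral datum $a_X$, matching the two a priori different $\Sym\cT_X$-module structures; the remaining statements are then uniqueness and codimension bookkeeping.
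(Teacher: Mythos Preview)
Your proposal is correct and follows essentially the same route as the paper: define $\psi$ (the paper's $i_a$) by restricting $d\pi$ to $\tilde X_a\subset\rT^*_C\times_CX$, factor through $Y_a$ via Cayley--Hamilton, deduce finiteness from finiteness over $X$, and for the reduced-fibre case invoke the uniqueness in Proposition~\ref{CM-fication} using that $d\pi$ is a closed embedding over $X^0$ and $\codim(X\setminus X^0)\geq 2$. Your write-up is in fact slightly more detailed than the paper's (you spell out why $X\setminus X^0$ is finite and why the Hilbert--Chow image of $\tilde X_a(x)$ is $a_X(x)$), and the ``main obstacle'' you flag is not a real obstacle: the $\Sym\cT_X$-action on $\psi_*\cO_{\tilde X_a}$ is by construction the one pulled back along $d\pi$, which is exactly the Higgs field with spectral datum $\iota_\pi(a)$.
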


\begin{proof}
Let
$i_a:\tilde X_a\to \rT^*_X$ be the restriction of 
the cotangent morphism $d\pi:\rT_C^*\times_CX\to \rT^*_X$
to the closed sub-scheme $\tilde X_a\subset \rT_C^*\times_CX$. By the 
Cayley-Hamilton theorem the map $i_a$ factors through 
the spectral surface $Y_a$. Let 
$\tilde\pi_a:\tilde X_a\to Y_a$ be the 
resulting map. 
As $\tX_a$ is finite over $X$, the map $\tilde\pi_a$ is finite.
In addition, if $a\in\mathscr A_C^\heartsuit$,
then both $\tilde X_a$ and $Y_a$ are generically \'etale over 
$X$ of degree $n$ and it implies $\tilde\pi_a$ is a generic isomorphism.
 Assume the fibers of $\pi$ are reduced.
Then the smooth locus $X^0$ of the map $\pi$ is open and its complement 
$X-X^0$ is a closed subset of codimension $\geq 2$.
Since the map $i_a:\tilde X_a\to \rT^*_X$
is a closed embedding over $X^0$, 
proposition \ref{CM-fication} implies 
the finite flat covering   
$\tilde\pi_a:\tilde X_a\to X$ 
is isomorphic to the finite Cohen-Macaulayfication $\tilde p_a:\tilde Y_a\to X$. 
\end{proof}

\begin{definition}
We define $\mathscr A_C^{\Diamond}$ to be the open subset of $\mathscr A_C^\heartsuit$
consisting of those points $a$ such that 
the corresponding spectral curve $C_a$ is 
smooth and 
irreducible. 
\end{definition}

\begin{corollary}\label{normality}
Assume the fibration $\pi:X\to C$ has only reduced fibers. Then 
the surface $\tilde Y_a$ is normal for $a\in\mathscr A_C^\Diamond$.
\end{corollary}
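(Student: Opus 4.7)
The plan is to apply Serre's criterion $R_1 + S_2$ for normality. By Lemma \ref{Y_a}, the hypothesis that $\pi$ has reduced fibers together with the inclusion $\mathscr A_C^\Diamond \subset \mathscr A_C^\heartsuit$ identifies $\tilde Y_a$ with $\tilde X_a = C_a \times_C X$, so it suffices to show that $\tilde X_a$ is normal. The $S_2$ condition is immediate: the projection $\tilde\pi_a : \tilde X_a \to X$ is finite flat of degree $n$ onto the smooth (in particular Cohen--Macaulay) surface $X$, and a finite flat cover of a Cohen--Macaulay scheme is again Cohen--Macaulay.

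For $R_1$, I would take a codimension one point $y \in \tilde X_a$ with images $c \in C_a$ and $x \in X$. If $c$ lies in the étale locus of $C_a \to C$ (which is étale off finitely many points since both curves are smooth), then $\tilde\pi_a$ is étale at $y$, and smoothness of $X$ propagates to give $\tilde X_a$ smooth, hence regular, at $y$. Otherwise $c$ maps to a branch point $b \in C$; since $\tilde\pi_a$ is finite, $x$ itself is a codimension one point of $X$ lying in $\pi^{-1}(b)$. By reducedness of $\pi^{-1}(b)$, the local ring $\cO_{X,x}$ is a DVR with uniformizer $s := \pi^* t_b$, where $t_b$ uniformizes $\cO_{C,b}$. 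On the other side, smoothness of $C_a$ makes $\cO_{C_a,c}$ a DVR with uniformizer $\tau$ and $t_b = u\tau^e$ for some unit $u$ and ramification index $e \leq n$; the assumption on the characteristic ($0$ or $p > n$) makes this extension tame.

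Passing to completions, a direct computation should yield
\[\widehat{\cO_{\tilde X_a, y}} \cong \kappa(x)[[s]] \,\widehat{\otimes}_{k[[t_b]]}\, k[[\tau]] \cong \kappa(x)[[\tau]],\]
which is a DVR; hence $\cO_{\tilde X_a, y}$ is regular. This establishes $R_1$ and, together with $S_2$, yields normality. The main technical point I expect to require care is the identification of the completed tensor product above: one must check that the relation $s = u\tau^e$ genuinely allows $s$ to be eliminated in favor of a power series in $\tau$ with coefficients in the (infinite, transcendental) residue field $\kappa(x)$, which is exactly where both the tameness hypothesis and the reducedness of $\pi^{-1}(b)$ enter.
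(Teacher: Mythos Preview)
Your argument is correct and shares the paper's overall framework: reduce to $\tilde X_a = C_a\times_C X$ via Lemma~\ref{Y_a}, obtain $S_2$ from Cohen--Macaulayness, and then verify $R_1$. The difference lies in how $R_1$ is checked. The paper avoids any local computation: since $X^0\to C$ is smooth, its base change $\tilde X_a^0:=C_a\times_C X^0\to C_a$ is smooth, and as $C_a$ is smooth by hypothesis, $\tilde X_a^0$ is smooth over $k$; the reduced-fibers assumption guarantees that $X\setminus X^0$, hence its finite preimage $\tilde X_a\setminus\tilde X_a^0$, has codimension at least two. Your route instead splits on the \'etale versus ramified locus of $C_a\to C$ and computes the completed local ring by hand in the ramified case. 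Both arguments use the reduced-fibers hypothesis at the same place (to control codimension-one points of fibers), but the paper's smooth-base-change step replaces your entire completion calculation in one line. Incidentally, the tameness you invoke is convenient for normalizing $t_b=\tau^e$ but not essential: once $s=\pi^*t_b$ is a uniformizer at $x$, the maximal ideal of $\cO_{\tilde X_a,y}$ is already generated by $\tau$, so the ring is a DVR regardless.
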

\begin{proof}
By the lemma above,
it suffices to show that $\tilde X_a$ is normal.
Since $\tilde X_a$ is Cohen-Macaulay, by Serre's criterion for normality,
it suffices to show that the $\tilde X_a$ is smooth in codimension $\leq 1$. 
The assumption implies the complement $X-X^0$ has codimension at least 2.
Since $C_a$ is smooth for $a\in\mathscr A_C^\Diamond$,
 the open subset $\tilde X^0_a:=\tC_a\times_CX^0\subset\tilde X_a$ is 
smooth (since the map $\tilde X^0_a\to C_a$ and $C_a$ are smooth) and 
the complement $\tilde X_a-\tilde X^0_a$ has codimension at least 2. The corollary follows
\end{proof}

\section{Ruled surfaces}\label{Ruled Surfaces}
Let $X$ be a ruled surface, that is, $X$ is a $\mathbb P^1$-bundle over 
a smooth projective curve $C$. We write $\pi:X\to C$ for the natural projection.

\begin{proposition}\label{spectral data for ruled}
We have $\mathscr A_C=\mathscr A_X$ and 
$\mathscr A_C^\heartsuit=\mathscr A_X^\heartsuit$.
The spectral data map 
\[\on{sd}_X:\on{Higgs}_X\to\mathscr A_X\] is surjective.

\end{proposition}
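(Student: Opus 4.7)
The plan is to reduce everything to the direct image computation $\pi_*\Sym^i\Omega^1_X\cong\Sym^i\Omega^1_C$ for every $i\geq 0$, from which both $\mathscr A_C=\mathscr A_X$ and the surjectivity statement follow by formal manipulations. I would prove this identity by filtering $\Sym^i\Omega^1_X$ via the relative cotangent sequence $0\to\pi^*\Omega^1_C\to\Omega^1_X\to\Omega^1_{X/C}\to 0$, obtaining a decreasing filtration $F^\bullet$ with $F^0=\Sym^i\Omega^1_X$, $F^i=\pi^*\Sym^i\Omega^1_C$, and graded pieces $F^j/F^{j+1}\cong\pi^*\Sym^j\Omega^1_C\otimes\Sym^{i-j}\Omega^1_{X/C}$. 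Since $\Omega^1_{X/C}$ restricts to $\mathcal O_{\mathbb P^1}(-2)$ on every fiber, the projection formula combined with fibrewise vanishing of $H^0(\mathbb P^1,\mathcal O(-2k))$ for $k\geq 1$ gives $\pi_*(F^j/F^{j+1})=0$ for all $j<i$. Descending the filtration with the left exact $\pi_*$ then collapses everything to $\pi_*F^i=\pi_*\pi^*\Sym^i\Omega^1_C=\Sym^i\Omega^1_C$.

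Passing to global sections this says $j_\pi:\mathscr B_C\to\mathscr B_X$ is an isomorphism. Plugging into the commutative square \eqref{maps between bases} and using that $\iota_C$ is an isomorphism (the curve case, already recalled in the text) together with the fact that $\iota_X$ is a closed embedding by Lemma \ref{closed-immersion}, I conclude that $\iota_\pi:\mathscr A_C\to\mathscr A_X$ is bijective, i.e.\ $\mathscr A_C=\mathscr A_X$. For the heart loci, note that $\pi$ is smooth everywhere on a $\mathbb P^1$-bundle so $X^0=X$; the identity $\mathscr A_C^\heartsuit=\mathscr A_C\cap\mathscr A_X^\heartsuit$ established in the previous section, combined with the equality just proved, immediately yields $\mathscr A_C^\heartsuit=\mathscr A_X^\heartsuit$.

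For the surjectivity of $\on{sd}_X$, I would construct, for each $a\in\mathscr A_X=\mathscr A_C$, a Higgs bundle on $X$ of spectral datum $a$ by pulling back from $C$. On the curve the spectral scheme $p_a:C_a\to C$ is a Cartier divisor inside the smooth surface $\rT^*_C$, hence Cohen--Macaulay of pure dimension one; as $C_a$ is finite over the smooth curve $C$, the sheaf $E_C:=p_{a*}\mathcal O_{C_a}$ is locally free of rank $n$, and multiplication on $\mathcal O_{C_a}$ by the tautological section of $(p_C|_{C_a})^*\Omega^1_C$ produces a Higgs field $\theta_C$ whose spectral datum is $a$ by construction. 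Setting $E:=\pi^*E_C$ and letting $\theta$ be $\pi^*\theta_C$ post-composed with the inclusion $\pi^*\Omega^1_C\hookrightarrow\Omega^1_X$ yields a Higgs field on $X$; integrability is automatic because $\pi^*\Omega^1_C$ is a pulled-back line bundle, so $\pi^*\Omega^1_C\wedge\pi^*\Omega^1_C=0$. A fibrewise check using that $d\pi$ identifies $\rT^*_C\times_CX$ with a closed subbundle of $\rT^*_X$ shows that the spectral datum of $(E,\theta)$ is exactly $\iota_\pi(a)=a$.

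The main substantive step is the direct image calculation $\pi_*\Sym^i\Omega^1_X=\Sym^i\Omega^1_C$; once it is established, the equality of bases reduces to chasing the square \eqref{maps between bases}, and the surjectivity reduces to a routine pullback construction whose only delicate point is the compatibility of spectral data under $\iota_\pi$.
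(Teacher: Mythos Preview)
Your argument is correct and follows the same overall architecture as the paper's: establish that $j_\pi:\mathscr B_C\to\mathscr B_X$ is an isomorphism, chase the square \eqref{maps between bases} to get $\mathscr A_C=\mathscr A_X$, and exhibit an explicit Higgs bundle over each $a$. The one substantive difference lies in how you prove $j_\pi$ is an isomorphism. The paper dispatches this in one line by invoking the birational invariance of global symmetric differentials: since $X$ is birational to $C\times\mathbb P^1$, one has $\rH^0(X,\Sym^i\Omega^1_X)=\rH^0(C\times\mathbb P^1,\Sym^i\Omega^1_{C\times\mathbb P^1})=\rH^0(C,\Sym^i\Omega^1_C)$. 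Your filtration argument via the relative cotangent sequence is more hands-on but has the virtue of being self-contained and of actually computing $\pi_*\Sym^i\Omega^1_X$ as a sheaf on $C$, not merely its global sections. For surjectivity, your pullback construction $(\pi^*E_C,\pi^*\theta_C)$ and the paper's pushforward $\pi_{a*}\mathcal O_{\tilde X_a}$ from $\tilde X_a=C_a\times_CX$ are literally the same Higgs bundle, identified by flat base change $\pi^*p_{a*}\mathcal O_{C_a}\cong\pi_{a*}\mathcal O_{\tilde X_a}$; the paper simply emphasizes the embedding $\tilde X_a\hookrightarrow\rT^*_X$ rather than the pullback viewpoint.
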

\begin{proof}
We first show that $\mathscr A_C=\mathscr A_X$.
Since $X$ is birational to $C\times\mathbb P^1$,
the embedding $j_\pi:\mathscr B_C=\bigoplus_{i=1}^n\rH^0(C,\Sym^i\Omega^1_C)\hookrightarrow
\bigoplus_{i=1}^n\rH^0(X,\Sym^i\Omega^1_X)=
\mathscr B_X$ 
in (\ref{maps between bases})
is an isomorphism. Since the maps in the diagram (\ref{maps between bases}) are all embeddings the desired equality follows. 
Since $\mathscr A_C^\heartsuit=\mathscr A_C\cap\mathscr A_X^\heartsuit$,
we obtain $\mathscr A_C^\heartsuit=\mathscr A_X^\heartsuit$.
We show that the spectral data map is surjective.
Let $a\in\mathscr A_C$. Then the push forward 
$E=\pi_{a*}\mO_{\tilde X_a}$ (here $\pi_{a}:\tilde X_a\to X$)
is a rank $n$ bundle on $X$. As the map $\pi_a$ factors through 
the morphism $i_a:\tilde X_a\to\rT_X^*$, the vector bundle $E$ is naturally 
equipped with a Higgs field $\theta$ and it follows from the construction that 
the spectral datum of the Higgs bundle $(E,\theta)$ is equal to $a$.
\end{proof}

We have the following:
\begin{proposition}
For every $a\in\mathscr A_C^\heartsuit$, the Cohen-Macaulay spectral surface 
$\tilde Y_a$ is 
isomorphic to $\tilde X_a=C_a\times_CX$, which is a closed subscheme of $\rT^*_X$,
and the Hitchin fiber 
$\Higgs_{X,a}$ is isomorphic to the moduli stack of Cohen-Macaulay 
sheaves on $\tilde Y_a$ of generic rank one. If $a\in\mathscr A_C^\Diamond$, then 
$\tilde Y_a$ is smooth and $\on{Higgs}_{X,a}$ is isomorphic to the Picard 
stack $\mathscr P_a$ of line bundles on $\tilde Y_a$.

\end{proposition}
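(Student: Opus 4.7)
The plan is to deduce both assertions directly from lemma~\ref{Y_a}, proposition~\ref{Hitchin fibers}, and proposition~\ref{spectral data for ruled}, exploiting that a $\mathbb P^1$-bundle $\pi:X\to C$ is a smooth morphism of relative dimension one.

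First I would note that smoothness of $\pi$ forces $X^0=X$, so $\pi$ has reduced fibres and the hypothesis of lemma~\ref{Y_a} is in force. Moreover, the relative cotangent sequence $0\to\pi^*\Omega^1_C\to\Omega^1_X\to\Omega^1_{X/C}\to 0$ is exact with locally free quotient of rank one, so $\pi^*\Omega^1_C$ is a subbundle of $\Omega^1_X$ and accordingly the cotangent map $d\pi:\rT^*_C\times_CX\to\rT^*_X$ is a closed embedding of $X$-schemes, not merely a closed embedding over $X^0$. Consequently $i_a:\tilde X_a\to\rT^*_X$ is a closed embedding globally. Applying lemma~\ref{Y_a} to any $a\in\mathscr A_C^\heartsuit=\mathscr A_X^\heartsuit$ (the equality being proposition~\ref{spectral data for ruled}), the finite morphism $\tilde\pi_a:\tilde X_a\to Y_a$ is identified with the finite Cohen-Macaulayfication $\tilde p_a:\tilde Y_a\to Y_a$, so $\tilde Y_a\cong\tilde X_a=C_a\times_CX$ is realised as a closed subscheme of $\rT^*_X$. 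The description of $\Higgs_{X,a}$ as the moduli stack of maximal Cohen-Macaulay sheaves of generic rank one on $\tilde Y_a$ is then immediate from proposition~\ref{Hitchin fibers}.

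For the second assertion I would assume $a\in\mathscr A_C^\Diamond$, so that $C_a$ is smooth. Smoothness of $\pi$ makes the base change $\tilde X_a\to C_a$ smooth, and combined with smoothness of $C_a$ this makes $\tilde X_a=\tilde Y_a$ a smooth surface. On a smooth, hence regular, surface the Auslander-Buchsbaum formula forces every maximal Cohen-Macaulay sheaf to be locally free; hence such a sheaf of generic rank one is a line bundle, and combined with the first part this yields $\Higgs_{X,a}\cong\mathscr P_a$.

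The whole of the content lies in invoking lemma~\ref{Y_a}: its reduced-fibres hypothesis is precisely what can fail for a general fibration of surfaces over a curve, whereas in the ruled case it holds trivially. The remaining inputs -- the global nature of the cotangent embedding and the regularity of $\tilde Y_a$ over the $\Diamond$-locus -- are essentially formal, so no serious obstacle is anticipated.
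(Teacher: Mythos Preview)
Your proposal is correct and follows essentially the same route as the paper's proof, which simply invokes lemma~\ref{Y_a} and proposition~\ref{Hitchin fibers} for the first claim and the smoothness of $C_a$ and of $\pi$ for the second. You have filled in the details the paper leaves implicit—namely the verification that $d\pi$ is a global closed embedding (justifying that $\tilde X_a\subset\rT^*_X$), the invocation of proposition~\ref{spectral data for ruled} for $\mathscr A_C^\heartsuit=\mathscr A_X^\heartsuit$, and the Auslander--Buchsbaum step reducing Cohen--Macaulay sheaves on a smooth surface to line bundles—but the architecture is identical.
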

\begin{proof}
The first claim follows from proposition \ref{Hitchin fibers} 
and lemma \ref{Y_a}.
The last claim follows from the fact that 
for $a\in\mathscr A_C^\Diamond$ the surface $\tilde Y_a=C_a\times_CX$ is smooth 
as 
$C_a$ and the fibration $\pi:X\to C$ are smooth.
\end{proof}

\begin{example}
Consider the case when $X=C\times\mathbb P^1$
and $n=2$. We have $\sA_X=\sA_C=\rH^0(C,\Omega_C^1)\oplus\rH^0(C,\Sym^2\Omega_C^1)$.
Let $a=(a_1,a_2)\in\sA^\heartsuit_C$ and $p_{a}:Y_a\to X$ be the corresponding
spectral surface. Then \'etale locally over $X$, 
the surface $Y_a$ is isomorphic to the closed subscheme of 
 $\mathbb A^4=\on{Spec}(k[x_1,x_2,t_1,t_2])$ 
defined by the equations 
\begin{equation}\label{phe2}
\left\{\begin{split} t_1^2+a_1t_1+a_2=0\\ t_2(2t_1+a_1)=0
 \\\ t_2^2=0
\end{split}
\right.
\end{equation}
here $x_1,x_2$ are local coordinate of $C$ and $\mathbb P_1$
and $a_i\in k[x_1]$.
Let $Dis=(a_1^2-4a_2=0)\subset C$ be the discriminant divisor for $a$.  
From (\ref{phe2}) we see that  $Y_a$ is an \'etale 
cover of degree $2$ away from the divisor  
$Dis\times\mathbb P^1\subset X$. Note  
that the spectral surface $p_{a}:Y_a\to X$ is not flat over $X$, indeed,
$p_{a*}\mO_{Y_a}$ has length three over 
$Dis\times\mathbb P^1$. 
The finite Cohen-Macaulayfication $\tilde Y_a\to Y_a$ is 
given by the flat quotient 
$\on{Spec}(p_{a*}\mO_{Y_a}/(p_{a*}\mO_{Y_a})^{\on{tors}})$
and is 
isomorphic 
to $\tilde Y_a=C_a\times\mathbb P^1$. 
The Hitchin fiber $\on{Higgs}_{X,a}$ is isomorphic to 
$\on{Higgs}_{C,a}\times\sP ic(\mathbb P^1)$, here 
$\on{Higgs}_{C,a}$ is the fiber of the Hitchin map for $C$ over $a$.

\end{example}

\section{Elliptic surfaces}\label{Elliptic surfaces}
In this section we study the case when $X$ is
an elliptic surface over an algebraically closed field $k$ of 
zero characteristic.
Namely, we assume there is a proper flat map 
$\pi:X\to C$ from $X$ to a smooth projective curve $C$ with general fiber 
a smooth curve of genus one. 
We will focus on the case when $\pi:X\to C$  
is non-isotrivial, relatively minimal, and has reduced fibers (e.g., semi-stable non-isotrivial elliptic surfaces).

\begin{proposition}\label{openness}
We have $\mathscr A_C^\heartsuit=\mathscr A_X^\heartsuit$
and 
$\mathscr A_C=\overline{\mathscr A_X^\heartsuit}$
is the closure of $\mathscr A_X^\heartsuit$ in $\mathscr A_X$. 
The image of the spectral data map 
$\on{sd}_X:\on{Higgs}_X\to\mathscr A_X$ 
is equal to $\mathscr A_C$. (The equalities here are understood set-theoretically).\footnote{If $\mathscr A_X$ is reduced then one can show 
that $\mathscr A_C^\heartsuit=\mathscr A_X^\heartsuit$,
$\mathscr A_C=\overline{\mathscr A_X^\heartsuit}$, and $\on{Im}(\on{sd}_X)=\mathscr A_C$
 scheme theoretically.}
\end{proposition}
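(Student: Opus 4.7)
The plan is to establish that $\mathscr A_X = \mathscr A_C$ set-theoretically (as subsets of $\mathscr B_X$ under the closed embeddings of diagram (\ref{maps between bases})), and then to handle surjectivity of $\on{sd}_X$ onto $\mathscr A_C$ by direct construction. Once the set-theoretic equality is in hand, the intersection formula $\mathscr A_C^\heartsuit = \mathscr A_C \cap \mathscr A_X^\heartsuit$ already available for any fibration immediately yields $\mathscr A_X^\heartsuit = \mathscr A_C^\heartsuit$; and since $\iota_C$ is an isomorphism in the curve case, $\mathscr A_C$ is an irreducible affine space with $\mathscr A_C^\heartsuit$ (the discriminant-nonzero locus) open dense in it, so $\overline{\mathscr A_X^\heartsuit} = \mathscr A_C$.

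The crux is therefore to prove $H^0(X, \Sym^i \Omega^1_X) = H^0(C, \Sym^i \Omega^1_C)$ for every $i$, since via the commutative square (\ref{maps between bases}) this forces the set-theoretic equality $\mathscr A_X = \mathscr A_C$. Let $X^0 \subset X$ denote the smooth locus of $\pi$; by the reduced-fibers hypothesis, $X \setminus X^0$ has codimension $\geq 2$. Over $X^0$ the natural filtration on $\Sym^i\Omega^1_X$ induced by $\pi^*\Omega^1_C \hookrightarrow \Omega^1_X$ yields a short exact sequence of locally free sheaves
\[
0 \to \pi^*\Sym^i\Omega^1_C \to \Sym^i\Omega^1_X \to Q_i \to 0,
\]
and I would restrict a section $a_i \in H^0(X, \Sym^i\Omega^1_X)$ to a general smooth fiber $F = F_c$. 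Non-isotriviality in characteristic zero guarantees that the Kodaira--Spencer class is nonzero at general $c \in C$, so $\Omega^1_X|_F$ is the unique non-split extension of $\mathcal O_F$ by $\mathcal O_F$, namely Atiyah's indecomposable bundle $F_2$; an inductive application of Atiyah's formula $F_2 \otimes F_i \cong F_{i+1} \oplus F_{i-1}$ together with the rank-two identity $F_2 \otimes \Sym^{i-1} F_2 \cong \Sym^i F_2 \oplus \Sym^{i-2} F_2$ gives $\Sym^i F_2 \cong F_{i+1}$, which has $h^0 = 1$ with its unique global section (up to scalar) coming from the bottom sub-line bundle $\pi^*\Sym^i\Omega^1_C|_F \cong \mathcal O_F$. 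Hence the image $\bar a_i|_F \in H^0(F, Q_i|_F)$ vanishes; since the union of such fibers is Zariski dense in $X^0$ and $Q_i$ is locally free there, we conclude $\bar a_i = 0$ on $X^0$. Thus $a_i|_{X^0}$ lifts to a section of the line bundle $\pi^*\Sym^i\Omega^1_C$ on $X^0$, which by a Hartogs-type extension across a codimension-$\geq 2$ subset of the smooth surface $X$ extends uniquely to a global section of $\pi^*\Sym^i\Omega^1_C$ matching $a_i$.

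Surjectivity of $\on{sd}_X$ onto $\mathscr A_C$ then follows by imitating the proof of proposition \ref{spectral data for ruled}: for $a \in \mathscr A_C$, one takes $\tilde X_a = C_a \times_C X$ and sets $E = \pi_{a*}\mathcal O_{\tilde X_a}$, a rank-$n$ vector bundle on $X$; the morphism $\tilde X_a \to \rT^*_C \times_C X \to \rT^*_X$ of lemma \ref{Y_a} equips $E$ with a Higgs field whose spectral datum is tautologically $a$. The main obstacle is the generic-fiber analysis: one must verify the Atiyah identification $\Sym^i F_2 \cong F_{i+1}$ together with its one-dimensional $H^0$, and arrange that the non-smooth locus of $\pi$ has codimension $\geq 2$ so that the extension step applies. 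This is precisely where the hypotheses (relative minimality, reduced fibers, non-isotriviality, characteristic zero) enter, the last securing that Kodaira--Spencer genuinely controls the splitting of $\Omega^1_X|_F$.
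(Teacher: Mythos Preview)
Your argument is correct and reaches the goal by a genuinely different route than the paper.  Both proofs rest on the same Kodaira--Spencer input: the restriction of $\Omega^1_X$ (equivalently $\cT_X$) to the generic fibre is Atiyah's indecomposable rank-two bundle $F_2$.  From there the paths diverge.  The paper deduces a \emph{nilpotency lemma}: for any Higgs bundle $(E,\theta)$, the vertical component of $\theta$ acts nilpotently on $E$ (the key step being that any map $F_2\to\cO_{X_\eta}$ kills the sub-line, so the trace on each generalized eigenspace forces the eigenvalue to vanish).  This shows only $\on{Im}(\sd_X)\subset\mathscr A_C$; the inclusion $\mathscr A_X^\heartsuit\subset\mathscr A_C$ is then obtained indirectly, by invoking non-emptiness of the Hitchin fibres over $\mathscr A_X^\heartsuit$ (Proposition~\ref{Hitchin fibers}).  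You instead compute $\Sym^i F_2\cong F_{i+1}$ outright and read off $h^0=1$, concluding $\rH^0(X,\Sym^i\Omega^1_X)=\rH^0(C,\Sym^i\Omega^1_C)$ for all $i$; via diagram~(\ref{maps between bases}) this yields the stronger set-theoretic equality $\mathscr A_X=\mathscr A_C$, from which all three assertions of the proposition follow at once without appealing to Proposition~\ref{Hitchin fibers}.  Your route is more self-contained and proves more; the paper's route isolates the nilpotency statement, which is of independent interest as a constraint on Higgs fields for such fibrations.
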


The following proposition follows from 
proposition \ref{Hitchin fibers}, proposition \ref{Y_a}, and corollary \ref{normality}:

\begin{proposition}
For every $a\in\mathscr A_C^\heartsuit$, the Cohen-Macaulay spectral surface 
$\tilde Y_a$ is 
isomorphic to $\tilde X_a=C_a\times_CX$ and
the Hitchin fiber 
$\on{Higgs}_{X,a}$ is isomorphic to the moduli stack of Cohen-Macaulay 
sheaves on $\tilde Y_a$ of generic rank one.
If $a\in\mathscr A_C^\Diamond$, then 
$\tilde Y_a$ is normal and the Hitchin fiber 
$\on{Higgs}_{X,a}$ 
contains the Picard stack $\mathscr P_a$ of line bundles on
$\tilde Y_a$ whose rigidified neutral component $\mathscr P_a^0$ 
 is isomorphic to
a quotient 
of an abelian variety by $\mathbb G_m$ with trivial action.
 \end{proposition}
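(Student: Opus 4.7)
The plan is to simply assemble the three ingredients cited just above the statement. The setup is that $\pi:X\to C$ is non-isotrivial, relatively minimal, and has reduced fibers, and we work in characteristic zero.

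First I would fix $a\in\mathscr A_C^\heartsuit$ and note that by Proposition \ref{openness} we have $\mathscr A_C^\heartsuit=\mathscr A_X^\heartsuit$, so $a$ is in particular a point of $\mathscr A_X^\heartsuit$ and the constructions of Section 4 apply. Since the fibration $\pi$ has reduced fibers, Lemma \ref{Y_a} directly gives a canonical identification $\tilde X_a=C_a\times_CX \cong \tilde Y_a$ of the Cohen-Macaulayfication of the spectral surface with the fiber product. Feeding this identification into Proposition \ref{Hitchin fibers} (which describes $\on{Higgs}_{X,a}$ for any $a\in\mathscr A_X^\heartsuit$ as the moduli stack of Cohen-Macaulay sheaves of generic rank one on $\tilde Y_a$) yields the first half of the statement, including the assertion that $\mathscr P_a\subset \on{Higgs}_{X,a}$.

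Next, for $a\in\mathscr A_C^\Diamond$, I would apply Corollary \ref{normality}, which uses precisely the reduced-fiber hypothesis together with smoothness and irreducibility of the spectral curve $C_a$, to conclude that $\tilde Y_a$ is normal. At that point the hypothesis of the earlier proposition on normal spectral surfaces (the one just before Section 5, based on Geisser's theorem \cite{Geisser}) is satisfied. Since $k$ has characteristic zero, the Picard stack $\mathscr P_a$ is smooth, so the reducedness qualifier there is automatic; consequently $\mathscr P_a^0=(\mathscr P_a^0)_{\on{red}}$ is a quotient of an abelian variety by $\mathbb G_m$ acting trivially.

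There is no real obstacle here; the proposition is a clean corollary and the proof amounts to checking that the hypotheses of the three cited results hold in the present elliptic-surface setting. The only point that deserves explicit comment is why we may drop the \emph{reduced} rigidified qualifier: this is exactly the smoothness of $\mathscr P_a$ in characteristic zero, which we invoke in the last step. Thus the proof is essentially one sentence per clause, and I would write it in that style, citing Proposition \ref{Hitchin fibers}, Lemma \ref{Y_a}, Corollary \ref{normality}, and the normal-surface proposition in turn.
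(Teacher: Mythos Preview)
Your proposal is correct and matches the paper's approach exactly: the paper itself states that the proposition ``follows from Proposition \ref{Hitchin fibers}, Proposition \ref{Y_a}, and Corollary \ref{normality}'' with no further argument. Your write-up is in fact more complete than the paper's one-line citation, since you also explicitly invoke the Geisser-based proposition for the $\mathscr P_a^0$ claim and explain why the reducedness qualifier can be dropped in characteristic zero; the only superfluous step is appealing to Proposition \ref{openness} for $\mathscr A_C^\heartsuit\subset\mathscr A_X^\heartsuit$, which already follows from the general identity $\mathscr A_C^\heartsuit=\mathscr A_C\cap\mathscr A_X^\heartsuit$ established in Section 6.
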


\begin{remark}
Unlike the case of ruled surfaces,  
the 
Cohen-Macaulay spectral surface $\tilde Y_a$ is in general not a 
sub-scheme of the cotangent bundle $\rT_X^*$ as the cotangent morphism 
$d\pi:\rT_C^*\times_CX\to\rT^*_X$ might not be an embedding.
\end{remark}
\begin{remark}
The results in section \ref{Ruled Surfaces} and section \ref{Elliptic surfaces}
show that Hitchin fibration for ruled and elliptic surfaces are 
closely related to Hitchin fibration for the base curve. This is compatible with 
the fact that under the Simpson correspondence, stable Higgs bundles 
for a smooth projective surface $X$
corresponds to irreducible representations of the fundamental group 
$\pi_1(X)$, and in the case of ruled and non-isotropic elliptic surfaces 
with reduced fibers we have $\pi_1(X)\is\pi_1(C)$ where 
$C$ is the base curve (see, e.g., \cite[Section 7]{Friedman}).
\end{remark}

The rest of the section is denoted to prove proposition \ref{openness}. 
We begin with some auxiliary lemmas:
\begin{lemma}
Let $\eta$ be the generic point of $C$ and 
$X_\eta=X\times_C\eta$ which is
an elliptic curve over $\eta$.
Consider the exact sequence 
$0\to\cT_{X^0/C}\to\cT_{X^0}\to(\pi^0)^*\cT_C\to 0$ (recall $X^0$ is the smooth locus of the fibration $\pi:X\to C$ and $\pi^0=\pi|_{X^0}$). Then the restriction of the exact sequence above to 
$X_\eta$ is isomorphic to the unique non-trivial self extension 
$0\to\mO_{X_\eta}\to\cI\to\mO_{X_\eta}\to 0$ of 
$\mO_{X_\eta}$.
\end{lemma}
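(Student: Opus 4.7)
The plan is to identify both outer terms of the restricted tangent sequence with $\mO_{X_\eta}$, then interpret the extension class as the Kodaira--Spencer class of the family at the generic point, and finally invoke non-isotriviality to ensure non-splitting.

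First I would trivialize the outer terms. The restriction $\cT_{X^0/C}|_{X_\eta}$ is the tangent sheaf $\cT_{X_\eta/\eta}$ of the elliptic curve $X_\eta/\eta$. Since $X_\eta$ is an elliptic curve (a smooth projective group scheme of dimension one over $\eta$), its tangent bundle is trivial and canonically identified with $\mO_{X_\eta}\otimes_{k(\eta)} \Lie(X_\eta/\eta)$; choosing a non-zero vector in the one-dimensional Lie algebra gives $\cT_{X^0/C}|_{X_\eta}\cong \mO_{X_\eta}$. Similarly, $(\pi^0)^*\cT_C|_{X_\eta}\cong \mO_{X_\eta}\otimes_{k(\eta)} \cT_{C,\eta}$, and since $\cT_{C,\eta}$ is a one-dimensional $k(\eta)$-vector space this is (non-canonically) isomorphic to $\mO_{X_\eta}$.

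Next I would identify the extension class. After the above identifications, the restricted tangent sequence is an extension of $\mO_{X_\eta}$ by $\mO_{X_\eta}$, hence defines a class in
\[
\Ext^1_{\mO_{X_\eta}}(\mO_{X_\eta},\mO_{X_\eta})=\rH^1(X_\eta,\mO_{X_\eta})\cong k(\eta),
\]
where the last isomorphism uses that $X_\eta$ is an elliptic curve. This class is, up to the non-zero scalars coming from the trivialization choices above, precisely the value at $\eta$ of the Kodaira--Spencer map $\cT_C\to \rR^1\pi^0_*\cT_{X^0/C}$ associated with the smooth morphism $\pi^0:X^0\to C$. This identification is the standard description of Kodaira--Spencer as the extension class of the tangent sequence restricted to a fibre; I would just quote this fact rather than rederive it.

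It then remains to check non-vanishing. Since $\pi:X\to C$ is assumed non-isotrivial, the classifying map from an open subset of $C$ to the moduli of elliptic curves is non-constant, so its derivative---the Kodaira--Spencer map---is non-zero on a dense open, and in particular non-zero at the generic point $\eta$. Consequently the extension class in $\rH^1(X_\eta,\mO_{X_\eta})$ is non-zero, so the restricted tangent sequence does not split. Because $\rH^1(X_\eta,\mO_{X_\eta})$ is one-dimensional, any two non-split self-extensions of $\mO_{X_\eta}$ differ by a non-zero scalar in $\Ext^1$ and are therefore isomorphic as extensions up to an automorphism of the outer copy of $\mO_{X_\eta}$; the common isomorphism class is the Atiyah bundle $\cI$, which is the unique indecomposable rank two bundle on $X_\eta$ arising this way.

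The only non-trivial step is the identification of the extension class with Kodaira--Spencer in step two: everything else is formal once the outer terms are trivialized. I would expect this to be the main obstacle only insofar as one wants to make the sign/scalar conventions explicit; conceptually it is immediate from the definition of Kodaira--Spencer as the connecting map in the long exact sequence obtained from $0\to\cT_{X^0/C}\to\cT_{X^0}\to (\pi^0)^*\cT_C\to 0$ by applying $\rR\pi^0_*$ and pairing with $\cT_C^{-1}$.
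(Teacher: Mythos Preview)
Your proof is correct and follows essentially the same route as the paper's. Both arguments hinge on the Kodaira--Spencer map being nonzero at $\eta$ by non-isotriviality: you phrase this as the extension class in $\Ext^1(\mO_{X_\eta},\mO_{X_\eta})\cong\rH^1(X_\eta,\mO_{X_\eta})$ being nonzero, while the paper takes the dual viewpoint, applying $\rR\pi^0_*$ to the tangent sequence and using injectivity of the connecting map $\kappa:\cT_C\to R^1\pi^0_*\cT_{X^0/C}$ to deduce $\dim\rH^0(X_\eta,\cI)=1$; these are two readings of the same long exact sequence.
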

\begin{proof}
Write $\cI=\cT_{X^0}|_{X_\eta}$. 
It is enough to show that 
$\on{dim}\rH^0(X_\eta,\cI)=1$.
Note that we have the following exact sequence 
\[0\to(\pi^0)_*\cT_{X^0/C}\to(\pi^0)_*\cT_{X^0}\to\cT_C\stackrel{\kappa}\to R^1\pi_*^0(\cT_{X^0/C}),\]
where the generic fiber of $\kappa$ is the Kodaira-Spencer morphism.
Since $\pi:X\to C$ is non-isotrivial, the map 
$\kappa$ is injective and the above exact sequence implies 
$(\pi^0)_*\cT_{X^0/C}\is (\pi^0)_*\cT_{X^0}$.
Since $X_\eta$ is an elliptic curve, 
we get 
\[\rH^0(X_\eta,\cI)=((\pi^0)_*\cT_{X^0})_\eta
\is((\pi^0)_*\cT_{X^0/C})_\eta\is
\rH^0(X_\eta,\cT_{X_\eta})\is\rH^0(X_\eta,\mO_{X_\eta})
\is k.\]
The lemma follows.

\end{proof}

\begin{lemma}\label{support 1}
Let 
$(E,\theta)\in\Higgs_X$. The action of 
$\cT_{X^0/C}\subset\cT_{X^0}$ on $E^0=E|_{X^0}$ is nilpotent.
Equivalently, let 
$\mM$ be coherent sheaf on $\rT^*_{X}$ associated to $(E,\theta)$.
Then the restriction of $\mM$ to $\rT^*_{X^0}$ is set theoretically supported on the closed subscheme
$\rT^*_{C}\times_{C}X^0\subset\rT^*_{X^0}$.
\end{lemma}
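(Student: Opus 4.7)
My plan is to observe that the two formulations in the lemma are equivalent—a formal consequence of the generalized Cayley--Hamilton theorem (Lemma 3.1), since $\mM$ is supported on the spectral cover $Y_a\subset\rT^*_X$ and the set-theoretic containment $Y_a|_{X^0}\subset\rT^*_C\times_CX^0$ is precisely the assertion that every local section of $\cT_{X^0/C}$ acts nilpotently on $E^0$ through $\theta$—and then focus on the nilpotency statement.

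To that end I would introduce the vertical Higgs field
\[
\theta_v\colon E^0\to E^0\otimes\Omega^1_{X^0/C}
\]
obtained by composing $\theta|_{X^0}$ with the quotient $\Omega^1_{X^0}\twoheadrightarrow\Omega^1_{X^0/C}$, and aim to show that the coefficients $c_i\in\rH^0(X^0,\Sym^i\Omega^1_{X^0/C})$ of its characteristic polynomial all vanish; fiberwise Cayley--Hamilton then gives $\theta_v^n=0$. By functoriality of the characteristic polynomial, $c_i$ is the image of the usual Hitchin coefficient $a_i\in\rH^0(X,\Sym^i\Omega^1_X)$ under $\Sym^i$ of the projection $\Omega^1_X\twoheadrightarrow\Omega^1_{X/C}$. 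Since $X^0$ is integral and the target sheaf is locally free, any section vanishing on the generic fiber vanishes identically; it thus suffices to show $c_i|_{X_\eta}=0$, which reduces everything to a cohomological computation on the elliptic curve $X_\eta$.

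On $X_\eta$ the previous lemma identifies $\cI=\cT_{X^0}|_{X_\eta}$ with the Atiyah bundle $F_2$ (the unique non-split self-extension of $\mO_{X_\eta}$); since $\det\cI=\mO_{X_\eta}$ we have $\cI^\vee\cong\cI\cong F_2$, and the standard identity $\Sym^i F_2\cong F_{i+1}$ (proved inductively from Atiyah's decomposition $F_2\otimes F_n\cong F_{n+1}\oplus F_{n-1}$) gives $\dim\rH^0(X_\eta,\Sym^i\cI^\vee)=1$. The subbundle $\pi^*\Omega^1_C|_{X_\eta}\hookrightarrow\cI^\vee$ induces a natural filtration on $\Sym^i\cI^\vee$ whose bottom piece is the horizontal subsheaf $\Sym^i(\pi^*\Omega^1_C|_{X_\eta})\cong\mO_{X_\eta}$ and whose top quotient is the vertical quotient $\Sym^i\Omega^1_{X_\eta/\eta}\cong\mO_{X_\eta}$. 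For $i\geq 1$, the composition $\mO_{X_\eta}\hookrightarrow\Sym^i\cI^\vee\twoheadrightarrow\mO_{X_\eta}$ is zero (the bottom piece lies in the kernel of the top projection), while the first map is an inclusion of sheaves inducing an injection—hence an isomorphism—between two $1$-dimensional $\rH^0$'s; consequently the second induces the zero map on $\rH^0$. Applied to $a_i|_{X_\eta}$, this forces $c_i|_{X_\eta}=0$, as required.

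The hard part will be the cohomological input $\rH^0(X_\eta,\Sym^iF_2)=k$ together with the identification of its generator as the image of the horizontal subsheaf—this is precisely where the special geometry of elliptic fibers and the non-isotriviality assumption (used in the previous lemma to pin down $\cI$ as the Atiyah bundle) actually enter. Everything else is formal bookkeeping about characteristic polynomials and torsion-free sheaves on an integral scheme.
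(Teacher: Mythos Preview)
Your proposal is correct and reaches the same conclusion, but by a genuinely different route from the paper. Both arguments restrict to the generic fiber $X_\eta$ and invoke the previous lemma's identification $\cI\cong F_2$; they diverge in how nilpotency of the vertical action is then extracted. The paper never touches the higher symmetric powers: it fixes the single endomorphism $\phi\in\End(E_\eta)$ coming from $1\in\cO_{X_\eta}\cong\cT_{X^0/C}|_{X_\eta}$, passes to generalized eigenspaces $E_\eta(s)$ over $\bar k_\eta$, and observes that the composite
\[
\cO_{X_\eta}\hookrightarrow\cI\xrightarrow{\ \phi_s\ }\End(E_\eta(s))\xrightarrow{\ \tr\ }\cO_{X_\eta}
\]
is on one hand (a nonzero multiple of) $s$, and on the other hand zero, because \emph{every} morphism $F_2\to\cO_{X_\eta}$ annihilates the sub $\cO_{X_\eta}$ (otherwise it would split the extension). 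So the only input beyond the previous lemma is the non-splitness of $F_2$ itself. Your approach instead kills all the vertical characteristic coefficients $c_i$ simultaneously via the cohomological fact $\dim\rH^0(X_\eta,\Sym^iF_2)=1$, for which you appeal to $\Sym^iF_2\cong F_{i+1}$. That identity is classical (Atiyah) and your deduction from it is clean, but it is a heavier piece of input than the paper needs; in exchange you avoid the base change to $\bar k_\eta$ and the eigenspace decomposition, keeping the argument entirely sheaf-theoretic and uniform in $i$.
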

\begin{proof}
We need to show that the action of 
$\cT_{X^0/C}\subset\cT_{X^0}$ on $E^0=E|_{X^0}$ is nilpotent. For this, 
it suffices to show that, over the generic point $\eta$ of $C$, 
the action of $\mO_{X_\eta}\is (\cT_{X^0/C})_{X_\eta}\subset \cI=\cT_{X}|_{X_\eta}$ 
on $E_\eta=
E|_{X_\eta}$
is nilpotent.
Let $\phi\in\on{End}_{X_\eta}(E_\eta)$ be the 
image of $1\in\mO_{X_\eta}$ under the action map $\mO_{X_\eta}\is
(\cT_{X^0/C})_{X_\eta}\to\on{End}_{X_\eta}(E_\eta)$ and 
let $E_\eta=\bigoplus_{s\in\bar k_\eta}E_{\eta}(s)$
be the generalized eigenspaces decomposition for $\phi$ (here $k_\eta$ is the function field of $C$).
The action of $\cI$ on 
$E_\eta$ preserves each $E_{\eta}(s)$ and 
we denote by $\phi_s:\cI\to\on{End}_{X_\eta}(E_\eta(s))$ 
the action map. Consider   
the following composition 
\[w:\mO_{X_\eta}\to\cI\stackrel{\phi_s}\to\on{End}_{X_\eta}(E_\eta(s))\stackrel{\on{tr}}\to\mO_{X_\eta},\]
where 
$\on{tr}$ is the trace map. 
It follows from the definition of $E_\eta(s)$ that 
$w=s\cdot\on{Id}$.
On the other hand,
since $\cI$ is the unique nontrivial self extension of
$\mO_{X_\eta}$, we have $\mO_{X_\eta}\subset\on{ker}(f)$
for any map $f:\cI\to\mO_{X_\eta}$. It implies
$w=0$, hence
the eigenvalue $s$ must be zero. The lemma follows.
\end{proof}

\subsubsection*{Proof of proposition \ref{openness}}
We first show that the image of spectral data map $\on{sd}_X$ is equal to 
$\mathscr A_C$.
The same argument for the proof of proposition \ref{spectral data for ruled}
shows that $\mathscr A_C\subset\on{Im}(\on{sd}_X)$.
To show that $\on{Im}(\on{sd}_X)=\mathscr A_C$
it remains to check that
$a=\on{sd}_X(E,\theta)\in\mathscr A_C(F)$
for any geometric point
$(E,\theta)\in\on{Higgs}_X(F)$ (here $F$ is an algebraically closed field containing $k$).
Let $h_X(E,\theta)=(a_1,...,a_n)\in\mathscr B_X(F)$
be the image of $(E,\theta)$ under the Hitchin map
$h_X:\on{Higgs}_X\to\mathscr B_X=\bigoplus_{i=1}^n\rH^0(X,\Sym^i\Omega^1_{X})$. 
By the 
factorization of Hitchin map in (\ref{factorization-Hitchin}) and the
diagram in (\ref{maps between bases}), to show that 
$a\in\mathscr A_C(F)$
it suffices to check that 
\[\iota_X(a)=h_X(E,\theta)\in\mathscr B_C(F)=\bigoplus_{i=1}^n \rH^0(C_F,\Sym^i\Omega^1_{C_F}).\]
For this we observe that lemma \ref{support 1} implies the restriction $a_i|_{X_F^0}$ of each section $a_i$ to $X_F^0$ lies in the subspace 
\[a_i|_{X_F^0}\in\rH^0(X_F^0,\pi^*\Sym^i\Omega^1_{C_F}).\]
Since we assume $\pi$ has reduced fibers, the complement 
$X_F-X_F^0$ is a closed subset of codimension $\geq 2$ and it implies 
 \[a_i\in \rH^0(X_F,\pi^*\Sym^i\Omega^1_{C_F})=\rH^0(C_F,\Sym^i\Omega^1_{C_F}).\]
Hence $h_X(E,\theta)=(a_1,...,a_n)\in\mathscr B_C(F)$ and the desired claim follows.
We show that $\mathscr A_C^\heartsuit=\mathscr A_X^\heartsuit$. 
Since $\mathscr A_C^\heartsuit=\mathscr A_C\cap\mathscr A_X^\heartsuit$, we need to show that 
$\mathscr A_X^\heartsuit\subset\mathscr A_C=\on{Im}(\on{sd}_X)$.
This follows form 
the non-emptiness of the 
fiber $\on{Higgs}_{X,a}=\on{sd}_X^{-1}(a),\ a\in\mathscr A_X^\heartsuit$ established in proposition 4.3.
Finally, since $\mathscr A_C$ is irreducible and closed in 
$\mathscr A_X$ we have 
$\overline{\mathscr A_X^\heartsuit}=\overline{\mathscr A_C^\heartsuit}=\mathscr A_C$.
This completes the proof of proposition \ref{openness}.

\section*{Acknowledgement}

\foreignlanguage{vietnamese}{Ngô Bảo Châu}'s research is partially supported by NSF grant DMS-1702380 and the Simons foundation. He is grateful \foreignlanguage{vietnamese}{Phùng Hồ Hải} for stimulating discussions in an earlier stage of this project. He also thanks Gérard Laumon for many conversations on the Hitchin fibrations over the years and his encouragement. 
The research of
Tsao-Hsien Chen is partially supported by NSF grant DMS-1702337.

\end{document}